\documentclass[12pt,reqno]{amsart}
\setlength{\textheight}{23cm}
\setlength{\textwidth}{16cm}
\setlength{\topmargin}{-0.8cm}
\setlength{\parskip}{0.3\baselineskip}
\hoffset=-1.4cm

\setcounter{secnumdepth}{3}
\setcounter{tocdepth}{3}

\usepackage{amssymb}
\usepackage{epsf,graphicx}
 
\newtheorem{theorem}{Theorem}[section]
\newtheorem{proposition}[theorem]{Proposition}
\newtheorem{lemma}[theorem]{Lemma}

\numberwithin{equation}{section}

\newcommand{\Z}{\mathbb{Z}}

\newcommand{\X}{\mathbb{X}}

\newcommand{\gn}{\mathfrak{n}}

\newcommand{\C}{\mathbb{C}}
\newcommand{\bP}{\mathbb{P}}
\newcommand{\SMS}{\mathcal{M}_{SL}}
\newcommand{\SMG}{\mathcal{M}_{GL}}
\newcommand{\SO}{\mathcal{O}}
\newcommand{\SI}{\mathcal{I}}
\newcommand{\SSX}{\mathcal{X}}
\newcommand{\SZ}{\mathcal{Z}}

\setcounter{tocdepth}{1}

\begin{document}
\baselineskip=15pt

\title[Framed Parabolic Sheaves on a Trinion]{Framed Parabolic Sheaves on a Trinion}

\author[I. Biswas]{Indranil Biswas}

\address{School of Mathematics, Tata Institute of Fundamental
Research, 1 Homi Bhabha Road, Mumbai 400005, India}

\email{indranil@math.tifr.res.in}

 \author[J. Hurtubise]{Jacques Hurtubise}

\address{Department of Mathematics, McGill University, Burnside
Hall, 805 Sherbrooke St. W., Montreal, Que. H3A 2K6, Canada}

\email{jacques.hurtubise@mcgill.ca}

\subjclass[2000]{14P99, 53C07, 32Q15}

\keywords{Stable vector bundles, algebraic curves, framed parabolic structures, trinion}

\date{}

\begin{abstract} 
We consider for structure groups ${\rm SU}(n)\,\subset\, {\rm SL}(n,\C)$ a densely defined toric structure on the moduli of framed parabolic sheaves on a 
three-punctured sphere, which degenerates to an actual toric structure. In combination with previous degeneration results, these extend 
to similar moduli for arbitrary Riemann surfaces.
\end{abstract}

\maketitle

\tableofcontents
\section{Introduction}

In 1992, Jeffrey and Weitsman, \cite{JW}, showed how the space of ${\rm SU}(2)$ representations of the 
fundamental group of a Riemann surface possessed an integrable Hamiltonian system with continuous 
Hamiltonians, smooth away from a small locus, and giving  a torus action on an open dense set; in a sense it is an 
``almost toric'' structure on the space. This structure was associated to the Goldman flows on 
representations associated to a trinion (three-punctured spheres) decomposition of the surface (it is also
called ``pant decomposition''). 
The construction allowed them to give an interpretation of the Verlinde formulae, \cite{Ve}, in 
terms of Bohr-Sommerfeld quantization, by counting lattice points in the moment polytope.
 
We recall that via the theorem of Narasimhan and Seshadri, \cite{NS}, the space of representations of 
the fundamental group in ${\rm SU}(n)$ (more generally, in a compact semi-simple group $G$) has another 
interpretation as a space of stable holomorphic rank $n$ vector bundles equipped with a global 
volume form (more generally, stable principal $G_\C$-bundles, where $G_\C$ is
the complexificaton of $G$). In the holomorphic interpretation of the 
moduli space as stable principal ${\rm SL}(2,\C)$ bundles the Verlinde formulae give the dimensions 
of spaces of sections of tensor powers of a natural determinant line bundle over the moduli space, and if the variety were 
actually toric, the point count would naturally correspond to this dimension, by the general 
theory of toric varieties (e.g. as in Fulton \cite{Fu}). It is thus as if the actual moduli space were a deformation of another moduli space that is 
actually toric. 
We now have the good fortune of being able to refer to a beautiful series of results on the structure of these toric degenerations, 
valid in large generality, both in the algebraic framework (see the papers of Anderson \cite{An}, Kaveh and Khovansky \cite{KK})) and in the 
symplectic framework (see the paper of Harada and Kaveh \cite{HK}). 

Indeed, in our case, a suitable toric candidate for ${\rm SU}(2)$ was produced in \cite{HJS1}, \cite{HJS2}. In these 
papers, a moduli space of ``framed parabolic sheaves'' was constructed over punctured Riemann 
surfaces, with the standard parabolic structures of bundles at the puncture (i.e., flags) being 
enriched by an additional framing variable (a trivialization of the top exterior powers of the 
subquotients of the flag). This space also has a symplectic interpretation as an ``imploded 
cross-section'' of the space of all framed representations of the fundamental group of the punctured 
surface, these representations being thought of as flat connections on a bundle, and the framing 
being the additional data of a trivialization at each puncture. The space of framed parabolic 
sheaves is a ``master space'' for parabolic bundles: it gives the standard parabolic moduli, 
either symplectically by a torus reduction, or holomorphically by the corresponding
geometric invariant theoretic (GIT) quotient 
by a complex torus; in both cases the tori act on the framings.

Using the actions of tori at the punctures, we can perform a glueing procedure: we first build a nodal curve from a 
(possibly disconnected) punctured curve by joining some or all of the punctures in pairs. Above this nodal curve, for each node, we 
can build ``partially glued" sheaves from framed parabolic sheaves by identifying the top exterior powers of the subquotients of the 
flags at the two punctures by their trivializations. This actually amounts to a diagonal symplectic quotient in the symplectic picture or a 
diagonal geometric invariant theoretic quotient in the holomorphic picture by the appropriate torus actions. (In fact, with the appropriate normalizations, the quotients are by an ``anti-diagonal" torus, because the order of the eigenvalues at one puncture is the opposite of that at the other puncture.) 
 
Of course, we need to see that this space is the actual result of a degeneration. This was carried out in \cite{BHu}, where the glued space 
was obtained from the standard moduli on smooth curves as a limit associated to a nodal degeneration of the curve into a ``balloon 
animal'' made up of trinions. That paper considers both a symplectic and a holomorphic version. It may be mentioned that
the degenerations obtained are different from those considered by Bhosle in \cite{Bh2}.
 
In the case of ${\rm SL}(2,\C)$ it is already shown, \cite{HJ}, that the trinion building block is $\bP^3(\C)$, and so the glued space over the nodal curve corresponding to the glued 
trinions is indeed toric, being obtained by symplectic (or GIT) torus quotients from a product of 
the trinion spaces ($\bP^3(\C)$) : the toric variety corresponding to ${\rm SU}(2)$ representations for 
a Riemann surface is a moduli space of ``partially glued" ${\rm SL}(2,\C)$ sheaves over a degeneration 
of the Riemann surface into a nodal curve whose each irreducible component is a trinion.
 
The construction of Jeffrey and Weitsman does not work in the ${\rm SL}(n,\C)$, $n\,>\,2$, case. The reason is that they use the 
Goldman flows associated to the trinion decomposition in their construction, and there are simply
not enough of the flows. While in this more general case also we can obtain the limiting space  for 
${\rm SL}(n,\C)$ as a moduli space associated to the nodal curve, and we find that it is a torus 
reduction of the trinion spaces, the latter show no sign of being toric. The subject of this paper
is to produce a further toric degeneration of the trinion spaces, giving us then also a toric 
degeneration of the moduli spaces.

It is our pleasure to dedicate this paper to our friend and collaborator Oscar Garc\'{\i}a-Prada, a constant source of ideas and projects, 
on the occasion of his sixtieth birthday.

\section{The moduli of framed parabolic sheaves on a trinion}

\subsection {Holomorphic and symplectic moduli}

From the complex geometric point of view, a trinion, which will be denoted by $X$, is a Riemann sphere with three marked
distinct points $p_1,\, p_2,\, p_3$.

Our aim is to see how to degenerate the moduli space of ${\rm SL}(n,\C)$-framed parabolic sheaves on a 
trinion $X$. The theory of semistable 
framed parabolic ${\rm SL}(n,\C)$ sheaves, which is given in \cite{HJS2}, is
briefly recalled below.

We first consider the case when the underlying sheaf is actually locally free.
On the trinion $X$, the data are:
\begin{itemize}
\item A rank $n$ holomorphic vector bundle $E$ on $X$ of degree $0$, with a fixed isomorphism
\begin{equation}\label{e1}
\Phi\,:\, \bigwedge\nolimits^n E\, \stackrel{\sim}{\longrightarrow}\,
\SO_X\end{equation}
(the ${\rm SL}(n,\C)$ structure).

\item Elements $$\beta_j^i \,\in\,\bigwedge^{i}E^*_{p_j},\ \ j\,=\,1,\,2,\,3,\ \
i\,=\,1,\,\cdots,\,n .$$ They may vanish, but must be decomposable (so the annihilator of any
nonzero $\beta_j^i$ is a subspace $E_j^{n-i}\,\subset\, E_{p_j}$ of dimension $n-i$),
and are compatible in the sense that if $\beta_j^i ,\, \beta_j^{i'}$ are both
nonzero, with $i\,<\, i'$, then there is an element $$\gamma^{i,i'}
\, \in\, \bigwedge\nolimits^{i'-i} E^*_{p_j}$$ such that $\beta_j^{i'} \,=\, \gamma^{i,i'}\bigwedge\beta_j^{i}$.
Furthermore, $\beta_j^n\,=\, \Phi(p_j)^{-1}(1)$, for $j\,=\,1,\,2,\,3$, where $\Phi$
is the fixed isomorphism in \eqref{e1}.
\end{itemize}
The relation to parabolic bundles is simply that each non-zero $\beta^i_j$ gives
the above $n-i$ 
dimensional subspace $E^{n-i}_j $ of the fiber $E_{p_j}$, defining a
flag at each $p_j$. The fact that some of the $\beta^i_j$ can be zero allows for
non-complete flags. Note that the above ``quotients'' $\gamma^{i,i'}_j$ define volume
forms on the quotients $E_j^{n-i}/E_j^{n-i'}$, hence the term ``framed''.

These framed bundles, even if when semi-stable, can degenerate to acquire torsion. Fortunately not 
much torsion is necessary, and in fact all of the properties of the torsion described below
follow from the semistability condition and being a closed orbit in the moduli space; see Lemma 3.4 of \cite {HJS2}. (In fact, in
terms of eventual parabolic weights, the torsion only appears when the gap $\alpha_1-\alpha_n$ is $1$.) The 
set of structures extends to allow:
 
\begin{itemize}
\item A rank $n$ sheaf $E$ on $X$, locally free away from the marked points, and of degree $0$.
The torsion sheaf is supported on the reduced marked points $p_1,\, p_2,\, p_3$.
Therefore, the torsion at $p_j$ is of the form $\SO_{p_j}^{\oplus s_j}$.

\item An ${\rm SL}(n,\C)$ structure on $E$, meaning an isomorphism $\det E\, \stackrel{\sim}{\longrightarrow}\,
{\mathcal O}_X$ (see \cite[Ch.~V, \S~6]{Ko} for determinant bundle of coherent sheaves). This is equivalent to
giving an isomorphism
$$\Phi\,:\, \bigwedge\nolimits^n(E/{\rm torsion})\, \stackrel{\sim}{\longrightarrow}
\,\SO(-s_1p_1-s_2p_2-s_3p_3),$$ where $s_j$ are the above integers.

\item Elements $\beta_j^i \,\in\, \bigwedge^{i}E^*_{p_j}$, $j\,=\,1,\,2,\,3$ and
$i\,=\,1,\,\cdots,\,n-1,\,n$. They may 
vanish, but must be decomposable (so that the annihilator
of any nonzero $\beta_j^i$ is a subspace $E_j^{n-i} \,\subset\, 
E_{p_j}$ of dimension $n-i $) and if $\beta_j^i ,\, \beta_j^{i'}$ 
are both nonzero, with $i\,<\,i'$, there is
\begin{equation}\label{eg}
\gamma^{i,i'}_j\,\in\, \bigwedge\nolimits ^{i'-i} E^*_{p_j}
\end{equation}
such that $\beta_j^{i'} \,=\, \gamma^{i,i'}_j\wedge\beta_j^{i}$. If the 
rank $s_j$, of the torsion sheaf at $p_j$, is positive, then
$\beta_j^n \,=\, 0$, $\beta_ j^i \,=\, 0$ for $i\,<\,s_j$ and 
$\beta_j^{s_j}$ is a nonzero product of elements $e_j^1,\, e_j^2,\, \cdots ,\, e_j^{s_j}$
generating the dual of torsion.
\end{itemize}

In \cite{HJS2} a moduli space $\SMS\,=\, \SMS(X)$ of these framed sheaves is constructed. It also has a 
symplectic version, defined using imploded cross-sections, expressed in terms of representations 
and framings of the eigenspaces of the monodromies at the punctures \cite{HJS1}. For our case 
this amounts to the following:
\begin{itemize}
\item A representation of the fundamental group of the trinion into ${\rm SU}(n)$.

\item Trivializations of the top exterior powers of the generalized eigenspaces associated to each eigenvalue of the monodromy around each 
puncture. If we lift the eigenvalues of the monodromy around the puncture $p_i$ to the fundamental alcove in the Lie algebra, i.e., take 
logarithms, these are given by $$\alpha^1(i)\,\geq\,\alpha^2(i)\,\geq\, \alpha^3(i)\,\geq \, \cdots \,\geq\, \alpha^n(i)
\,\geq \,\alpha^1(i) -1,\ \ \sum_{j=1}^n \alpha^j(i) \,=\, 0.$$ Up to a 
normalization factor, the norms of the trivializations in the holomorphic picture are given by the differences $\alpha^{j }(i)- 
\alpha^{j+1}(i)$. These $\alpha^{j+}(i)- \alpha^{j+1}(i)$ serve as the moment maps for the natural torus actions on the trivializations. Under 
the standard equivalence with parabolic bundles, the $\alpha^j(i)$ also give the weights associated to the geometric
invariant theoretic construction of the moduli space of parabolic bundles.
\end{itemize}

On the holomorphic side, the addition of torsion means that we can obtain, for example, moduli spaces of bundles of different degrees 
from the same space of framed sheaves; the difference of degrees is ``made up'' by torsion. This occurs when the equality 
$\alpha^1(i)-\alpha^n(i) \,=\, 1$ holds for eigenvalues (recall that the $\sum_{j=1}^n \alpha^j(i)\,=\,0$). The point here is that while we can have 
locally free bundles with flags for $\alpha^1(i)-\alpha^n(i) \,=\, 1$, the orbits of these are not closed in the GIT construction, and we can 
degenerate so that the bundle acquires nontrivial torsion; these will represent the closed orbits in the GIT construction.

The moduli space of these framed sheaves is represented by a standard quotient construction: twist the sheaf $E$ by ${\SO}_{X}(k)$
for $k$ sufficiently large, and let $Z$ be the space of sections $H^0(X,\, E(k))\,=\,
H^0(X,\, E\otimes {\SO}_X(k))$. We have the homomorphism
$$
\bigwedge\nolimits^nZ\,=\, \bigwedge\nolimits^n H^0(X,\, E(k))\,\longrightarrow\, H^0(X,\,
\det E(k))\,=\, H^0(X,\, (\det E)\otimes \SO_X(nk)).
$$
Following Bhosle \cite{Bh}, and Hurtubise-Jeffrey-Sjamaar \cite{HJS2}, using the trivialization of
$\det E$ the above homomorphism gives a homomorphism 
\begin{equation}\Phi\,\,\in\,\, {\rm Hom}\left(
\bigwedge\nolimits^nZ,\, H^0(X,\,\SO_X(nk))\right).
\end{equation}
This $\Phi$ represents the sheaf $E(k)$, and so it represents $E$.
Indeed, we can define a subbundle $Ann(\Phi)$ of the trivial vector bundle $\SZ \,= \,Z\times X\, \longrightarrow\, X$ by
\begin{equation}\label{AnnPhi}
Ann(\Phi) \,=\, \left\{(s,\, x)\,\in\, \SZ\,\,\big\vert\,\, \Phi( s\wedge\alpha)(x) \,=\, 0\ \text{ for all }\,
\alpha\,\in\, \bigwedge\nolimits^{n-1}\SZ\right\}\, \subset\, \SZ. 
\end{equation}
This gives an exact sequence
\begin{equation}\label{ee}
0\,\longrightarrow\, Ann(\Phi)\,\stackrel{\eta}{\longrightarrow}\, \SZ \,=\, {\SO}_X^{\oplus N}
\,\longrightarrow \,E(k)/{\rm torsion}\,\longrightarrow\, 0,
\end{equation}
where $N\,=\, \dim H^0(X,\,E(k))$. We note that the global sections of $Ann(\Phi)$
correspond to the sections of $E(k)$ with values in its torsion subsheaf. Let
$$
T\,:=\, \eta(H^0(X,\, Ann(\Phi)))\, \subset\, \SZ \,=\, {\SO}_X^{\oplus N}
$$
be the trivial subsheaf of $\SZ$ generated by $H^0(X,\, Ann(\Phi))$, where $\eta$ is the map in \eqref{ee}.

For $j\,=\, 1,\, 2,\, 3$, consider the evaluation map $$Z\,=\, H^0(X,\,E(k)) \, \longrightarrow\, E(k)_{p_j}.$$ Take
the dual of the $i$-th exterior product of it
$$
\varphi_j^i\, :\, \bigwedge\nolimits^{i} E(k)^*_{p_j} \, \longrightarrow \, \bigwedge\nolimits^{i} H^0(X,\, E(k))^*\, .
$$
Let
\begin{equation}\label{ea}
\psi_j^i\,:=\, \varphi_j^i(\beta^i_j) \,\in\, \bigwedge\nolimits^{i} H^0(X,\, E(k))^*
\end{equation}
be the image, under $\varphi_j^i$, of the exterior algebra element $\beta^i_j$ in the definition of framed flags.

The planes in the flags are given by the evaluation at $p_j$ of $Ann(\psi_j^i)$, where $\psi^i_j$ are constructed in \eqref{ea}. 
The moduli space is the GIT quotient under ${\rm GL}(N, \C)$ of tuples $\Phi, \psi_j^i$. Note that these elements are monomials in 
the exterior algebra (point-wise, in the case of $\Phi$), that is they satisfy Pl\"ucker relations, and that there are compatibility 
conditions between the $\psi_j^i$ at each $p_j$. In particular, the fact that we are getting flags at $p_j$ is due to the 
compatibility of $\Phi,\, \psi_j^i$, so that $$Ann(\Phi(p_j))\,\subset\, T + Ann(\psi_j^i),$$ and $Ann(\psi_j^i)\,\subset\, 
Ann(\psi_j^{i'})$ whenever $i\,<\,i'$.

We have two lemmas from \cite{HJS2}:

\begin{lemma}[{\cite[p.~359, Lemma 3.3]{HJS2}}]
Let $\left(\Phi,\, (\psi^i_j)_{i=1,\cdots ,m, j= 1,2,3}\right)$ be a semi-stable point. Then
the following two hold:
\begin{enumerate}
\item $\Phi\,\ne\, 0$, and $Ann(\Phi)$ is a subbundle of $\mathcal Z$ rank $N-n$. 

\item The intersection $H^0(\Sigma,\, Ann(\Phi))\bigcap (\bigcap_{i} 
Ann(\psi_j^i))$ is zero for all $j\,=\, 1,\, 2,\, 3$.
\end{enumerate}
\end{lemma}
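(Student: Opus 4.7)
The plan is to prove both parts of the lemma by applying the Hilbert--Mumford numerical criterion to the $\GL(N,\C)$-action that defines the GIT quotient. In each case I would assume the stated conclusion fails and exhibit a one-parameter subgroup (1-PS) of $\GL(N,\C)$ relative to which the tuple $(\Phi,(\psi_j^i))$ picks up strictly destabilizing weight in the linearization chosen in \cite{HJS2}, contradicting semi-stability.

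\textbf{Part (1).} To see $\Phi\neq 0$, I would test against the central 1-PS $\lambda(t)=t\cdot\mathrm{Id}_Z$. Following Bhosle \cite{Bh} and its refinement in \cite{HJS2}, the linearization is normalized so that on a semi-stable point the weight contribution from $\Phi\in\mathrm{Hom}(\bigwedge^n Z, H^0(X,\SO_X(nk)))$ balances the combined weight contribution from the $\psi_j^i\in\bigwedge^i Z^*$; if $\Phi=0$ no such balance is possible, and a rescaling of $Z$ destabilizes. For the second half, pointwise $Ann(\Phi)_x$ has rank exactly $N-n$ iff $\Phi(x)\in(\bigwedge^n Z)^*$ is nonzero and decomposable. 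Decomposability is part of the Pl\"ucker data built into the GIT construction. Non-vanishing of $\Phi$ at every $x$ I would prove by viewing $\Phi$ as a section of an appropriate line bundle, using $\det E\cong\SO_X(-\sum s_j p_j)$, so that the only zeros allowed are at $p_j$ with multiplicity $s_j$; any extra zero $x_0$ would supply a destabilizing 1-PS on $Z$ coming from the filtration by order of vanishing at $x_0$.

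\textbf{Part (2).} Suppose for contradiction $0\neq s\in H^0(X,Ann(\Phi))\cap\bigcap_i Ann(\psi_j^i)$ for some $j\in\{1,2,3\}$. Via the inclusion $\eta$ in \eqref{ee}, $s$ corresponds to a nonzero element of $Z=H^0(X,E(k))$ satisfying $\Phi(s\wedge\alpha)=0$ in $H^0(X,\SO_X(nk))$ for every $\alpha\in\bigwedge^{n-1}Z$, and $\iota_s\psi_j^i=0$ for every $i$. Choosing a complement $Z=\langle s\rangle\oplus W$, I would use the 1-PS $\lambda$ acting with weight $N-1$ on $\langle s\rangle$ and $-1$ on $W$; this lies in $\SL(N,\C)$. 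Because $s$ annihilates $\Phi$ and each $\psi_j^i$, these tensors lie in the weight subspaces built from $W$ alone, so their $\lambda$-weights have common definite sign (inherited from the weight on $W^*$). The total Hilbert--Mumford weight then comes out with the wrong sign for semi-stability.

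\textbf{Main obstacle.} The principal difficulty is keeping track of the precise linearization of \cite{HJS2}: the weights attached to $\Phi$ and to each $\psi_j^i$ depend on the parabolic weights $\alpha^j(i)$, the twist degree $k$, and the torsion multiplicities $s_j$ (entering through $\det E\cong\SO_X(-\sum_j s_j p_j)$). The sign bookkeeping must be handled with care so that each destabilizing weight above is strictly, rather than merely weakly, on the destabilizing side of zero. A secondary subtlety is reconciling the allowed vanishing of $\Phi$ at the marked points in \eqref{ee} with the pointwise non-vanishing argument used to control the rank of $Ann(\Phi)$.
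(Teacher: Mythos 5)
This lemma is imported verbatim from \cite{HJS2} and the present paper gives no proof of it, so there is no internal argument to compare yours against; the proof lives in the cited reference, and the Hilbert--Mumford strategy you describe is indeed the method used there (test semi-stability of the tuple $(\Phi,(\psi_j^i))$ against one-parameter subgroups adapted to subspaces of $Z$). So your overall approach is the right one. However, what you have written is a plan rather than a proof: at each of the three decisive points you state that a weight "comes out with the wrong sign" without computing it, and you yourself identify the linearization bookkeeping as the "main obstacle." Since the entire content of the lemma is that these signs work out for the specific polarization of \cite{HJS2} (which depends on $k$, the parabolic weights and the number of marked points), deferring that computation means the lemma is not actually established.

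Two gaps are concrete enough to name. In Part (2), the one-parameter subgroup with weights $(N-1,-1,\dots,-1)$ relative to $\langle s\rangle\oplus W$ does force $\Phi$ and the $\psi_j^i$ for the \emph{chosen} $j$ into tensors built from $W$ alone, but the tensors $\psi_{j'}^i$ at the other two punctures $j'\neq j$ are not annihilated by $s$ and will in general have components involving $s^*$; those components carry weight of the opposite sign and enter the Hilbert--Mumford function through a max/min over nonzero components. Your claim that all relevant weights have a "common definite sign" is therefore asserted, not proved, and the conclusion genuinely depends on the relative weights in the linearization. In Part (1), the pointwise non-vanishing argument for $\Phi$ is in tension with the torsion normalization: when $s_j>0$ the map $\bigwedge^n Z\to H^0(X,\SO_X(nk))$ factors through sections vanishing at $p_j$, so the fiberwise annihilator in \eqref{AnnPhi} is all of $Z$ at $p_j$; the rank-$(N-n)$ subbundle of the lemma is really the saturation, i.e.\ the kernel of the surjection onto the locally free quotient $E(k)/\mathrm{torsion}$ in \eqref{ee}, and identifying the two (equivalently, showing the quotient sheaf has no torsion away from the marked points and is saturated there) is precisely the step that needs the GIT input. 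You flag this as a "secondary subtlety" but do not resolve it. To turn the proposal into a proof you would need to write down the explicit numerical function $\mu(\,\cdot\,,\lambda)$ for the polarization of \cite{HJS2} and verify the strict inequalities in each case.
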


The following lemma is on the torsion part of a semi-stable sheaf.

\begin{lemma}[{\cite[pp.~359--360, Lemma 3.4]{HJS2}}]\label{stable-sheaves}\mbox{}
\begin{enumerate}
\item The torsion subsheaf of $E$ is concentrated over the points $p_1,\, p_2,\, p_3$.

\item For any $j\,=\, 1,\, 2,\, 3$, let $x$ be a local holomorphic coordinate function on $X$ around $p_j$, with $x(p_j)\,=\,
0$. The torsion subsheaf at $p_j$ is a sum of modules ${\SO}/x^{l_{\mu}}{\SO}$. Then all the $l_{\mu}$ are one.

\item The torsion submodule $T_j$ of $E$ at $p_j$ is then a sum of terms 
${\SO}/x{\SO}$, which we write as vector spaces $\C^{s_j}$.
One then has that $s_j\,<\,n$. If $e_1,\,\cdots,\, e_{s_j}$ is a basis of $T_j$, then the 
contraction of $e_1\wedge\cdots \wedge e_{s_j}$ with $\psi_j^i$ is
non-zero for at least one $i\,\ge\, s_j$.

\item Let $\widehat{e}_1,\, \widehat{e}_2,\, \cdots ,\, \widehat{e}_{s_j}$ be elements of $(\C^N)^*
\,=\, H^0(\Sigma,\, E(k))^*$ corresponding to the duals of $e_1,\, \cdots ,\, e_{s_j}$ under 
evaluation. If the orbit of $\left(\Phi, (\psi_j^i)_{i=1,\cdots ,n; j= 1,2,3}\right)$ is 
closed, then for each $p_j$ we can choose the bases $e_1,\,\cdots ,\, e_{s_j}$ and
$\widehat{e}_1,\, \widehat{e}_2,\, \cdots ,\,\widehat{e}_{s_j}$, and complete the basis of
$\C^N $ by elements $\widehat{e}_{s_j+1},\,\cdots ,\, \widehat{e}_N$, so that there exist 
constants $c_j$, possibly zero, with 
$\psi_j^i \,=\, c_j \widehat{e}_1 \wedge\widehat{e}_2\wedge\cdots\wedge\widehat{e}_i$, for 
$i\,=\, s_j,\, \cdots ,\, n$ (``The torsion vectors come first''), $\psi_j^i \,=\, 
0$ for $i\,<\, s_j$, and $\psi_j^{s_j}\,\neq\, 0$.
\end{enumerate}
\end{lemma}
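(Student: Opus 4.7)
The approach throughout is to exploit the GIT construction: the moduli space is a quotient of tuples $(\Phi,\psi_j^i)$ by $\mathrm{GL}(N,\C)$, so each assertion is tested using the Hilbert--Mumford numerical criterion by producing a suitable one-parameter subgroup and inspecting the limit of its action on the tuple.

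For (1), torsion at a point $q\notin\{p_1,p_2,p_3\}$ interacts only trivially with the framings, which are supported at the $p_j$. A one-parameter subgroup of $\mathrm{GL}(N,\C)$ that scales the subspace of sections supported at $q$ yields a limit point in the orbit closure with the torsion at $q$ stripped off; by the closed-orbit hypothesis the limit lies in the orbit, but the two sheaves have different torsion support, forcing the original to have no torsion at $q$. For (2), at $p_j$ with local coordinate $x$, an $\SO/x^l\SO$ summand with $l\ge 2$ carries a nontrivial internal filtration by $x$-multiplication; a one-parameter subgroup scaling the top generator while fixing the deeper submodule has a limit corresponding to the split $\SO/x\SO\oplus\SO/x^{l-1}\SO$, which is of a different isomorphism type, contradicting closedness unless $l=1$.

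For (3), the inequality $s_j<n$ follows directly from the framed-structure definition: $s_j=n$ would require $\beta_j^{s_j}=\beta_j^n$ to be simultaneously nonzero (a product of torsion generators) and zero (the rule $\beta_j^n=0$ when torsion is present at $p_j$). The nonvanishing of the contraction of $e_1\wedge\cdots\wedge e_{s_j}$ with some $\psi_j^i$ for $i\ge s_j$ is then the transport, through the evaluation dual $\varphi_j^i$, of the nonvanishing of $\beta_j^{s_j}$; part (2) of the previous lemma rules out the degenerate alternative that the torsion basis lies entirely in $H^0(X, Ann(\Phi))\cap\bigcap_i Ann(\psi_j^i)$.

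For (4), the vanishing $\psi_j^i=0$ for $i<s_j$ is immediate from the corresponding vanishing of $\beta_j^i$ in the framed-structure definition; the content is to choose, for each $p_j$ separately, a basis of $\C^N$ realizing the surviving $\psi_j^i$ ($i\ge s_j$) as $c_j\hat{e}_1\wedge\cdots\wedge\hat{e}_i$. Fix $j$. The nesting compatibility of the $\beta_j^i$ (hence of the $\psi_j^i$) provides subspaces $W_{s_j}\subset W_{s_j+1}\subset\cdots\subset W_n\subset\C^N$ with $\dim W_i=i$ and $\psi_j^i$ proportional to the top wedge of $W_i$. Take $\hat{e}_1,\ldots,\hat{e}_{s_j}$ to be a basis of $W_{s_j}$, extend to a basis $\hat{e}_1,\ldots,\hat{e}_n$ of $W_n$ adapted to the flag, then extend arbitrarily to $\C^N$; by construction each $\psi_j^i$ is a scalar multiple of $\hat{e}_1\wedge\cdots\wedge\hat{e}_i$. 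The closed-orbit hypothesis enters to guarantee that $W_{s_j}$ is the image of the torsion duals, so that the torsion vectors really come first: a one-parameter subgroup scaling the torsion duals relative to other basis elements would otherwise produce an orbit-closure limit with this alignment, and closedness returns this limit to the original orbit. This alignment step is the main obstacle, as it rests on the subtle interplay between the torsion structure and the flag filtration, and is precisely where the closed-orbit hypothesis (rather than mere semistability) is essential.
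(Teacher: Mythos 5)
First, note that the paper does not prove this lemma at all: it is quoted verbatim from Hurtubise--Jeffrey--Sjamaar \cite[Lemma 3.4]{HJS2}, so there is no internal argument to compare against and your proposal has to stand on its own. As it stands it has a genuine circularity in part (3): you derive $s_j<n$ and the nonvanishing of the contraction from the clauses ``$\beta_j^n=0$ when torsion is present'' and ``$\beta_j^{s_j}$ is a nonzero product of torsion generators'' in the framed-structure definition, but the paper says explicitly that ``all of the properties of the torsion described below follow from the semistability condition and being a closed orbit,'' i.e.\ those clauses are the \emph{conclusions} of Lemma 3.4, not hypotheses you may quote. A non-circular argument must start from the tuple $(\Phi,\psi_j^i)$: every section of $E(k)$ with values in the torsion at $p_j$ lies in $H^0(X,Ann(\Phi))$, so Lemma 3.3(2) forces \emph{some} $\psi_j^i$ to pair nontrivially with each such section, and upgrading that to nonvanishing of the full contraction of $e_1\wedge\cdots\wedge e_{s_j}$ is exactly the alignment statement of part (4), which needs the closed-orbit argument rather than the definition.

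Second, your one-parameter-subgroup arguments for (1), (2) and (4) stop precisely where the work is. For (1) and (2) you assert that the limit of the tuple under your 1-PS ``corresponds to'' the sheaf with the torsion stripped off, resp.\ to $\SO/x\SO\oplus\SO/x^{l-1}\SO$, and lies in a different orbit; neither is verified, and in (2) the two modules have the same length, so distinguishing the $\GL(N,\C)$-orbits of the associated tuples is the whole point at issue. In fact (1) and (2) need no new Hilbert--Mumford computation: a section of $E(k)$ supported in torsion at $q\notin\{p_1,p_2,p_3\}$, or with values in $x\cdot(\SO/x^{l}\SO)$ at $p_j$, lies in $H^0(X, Ann(\Phi))\cap\bigcap_i Ann(\psi_j^i)$ (it is a torsion section evaluating to zero against all the $\psi$'s), and that intersection vanishes by Lemma 3.3(2), using only semistability. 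For (4), the flag-adapted basis coming from the compatibility of the $\psi_j^i$ is fine, but the claim that closedness forces $W_{s_j}$ to be spanned by the torsion duals is asserted rather than proved: one must exhibit the 1-PS, check its limit exists and has the aligned form, and then use closedness to transport that form back to the original tuple by a group element compatible with the evaluation duality between the $e_m$ and the $\widehat{e}_m$. Those verifications are the substantive content of the lemma.
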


We note that in consequence the rank $s_j$ of the torsion simply records for closed orbits the vanishing of $\psi_j^n$,\, $\psi_j^i$,\,
$i \,= 1,\, \cdots,\, s_j-1$.
 
The result of \cite{HJS2} tells us that there is a moduli space of these framed sheaves on any punctured Riemann surface
$\Sigma$ --- with
both a holomorphic and a symplectic realization. In the holomorphic version, there is a natural complex torus action on the framings 
for each marked point. Symplectically, there is a real torus action on unitary framings, well defined as we are imploding. Its moment 
map is simply the set of eigenvalues of the holonomy at the marked point, and takes values in the fundamental alcove of ${\rm SU}(n)$.
 
\subsection {Glueing}
 The moduli space admits a torus action at each marked point. The various moduli spaces of parabolic bundles can be obtained as torus 
quotients of $\SMS$, or rather of strata inside $\SMS$ given by the vanishing of various $\beta_j^i$: in essence, we quotient out the 
scale given by $\gamma^{i,i'}_j$ (see \eqref{eg}). Indeed, the stability theory of parabolic bundles for
${\rm SL}(n,\C)$ involves a simplex of weights
$$\Delta \,=\, \left\{(\alpha^1\,\geq\,\alpha^2\,\geq\, \cdots\, \geq\, \alpha^n\,\geq \,\alpha^1-1)\,\,
\big\vert\,\, \sum_{i=1}^n \alpha^i \,=\, 0\right\}$$
for each marked point. This simplex of weights decomposes into faces
$$\Delta^I,\,\, \, I \,=\, \{i_1\,<\,i_2\,<\,\cdots \,<\,i_k\}$$ consisting of the weights
for which $\alpha^{i}\,>\,\alpha^{i+1}$ if and only if $i\,\in \,I$. For the trinion, there are three simplices $\Delta_j$ (one
per marked point), and the faces $\Delta_j^I$ will correspond to strata in the framed moduli space for which only the
$\beta_j^{i_l},\,\, i_l\,\in\, I$, are non-zero, i.e., for which there is a partial flag at $p_j$ indexed by $I$. With that the
stability for the framed moduli is given by the induced parabolic bundle being stable for one choice of weights in the 
corresponding $\Delta_j^I$.

There is an alternative operation: given a framed sheaf on a surface $\Sigma$ with punctures at $p_1,\, p_2$ with identical vanishing 
patterns for the $\beta_1^j,\, \beta_2^j$ (more precisely, mapped to one another by the reversal of order in $\{1,\,\cdots ,\,n\}$), we can use the trivializations to identify the respective subquotients of the flags at 
$p_1,\, p_2$. This amounts to taking a anti-diagonal torus quotient on the framings at $p_1,\, p_2$; symplectically, it amounts to setting 
$\alpha_1(j) +\alpha_2(j) \,=\, 0$, and then quotienting by the real torus.

The same operation is possible if the two points $p_1$, $p_2$ are on different surfaces $\Sigma_i$. If we start with a surface 
$\Sigma_0$ built from a set of trinions by glueing at the punctures, we can obtain a moduli space associated to $\Sigma_0$ as a torus 
quotient of a ($3g-3$)-fold product of the moduli $\SMS$ associated to a trinion by iterating the above glueing construction. Thus a 
toric structure on a deformation of the space associated to $\Sigma_0$ depends on finding one for $\SMS$.

\subsection{A generic set}\label{se2.2}

We now turn to understanding an ``almost toric" structure on the moduli $\SMS$. As this is a bit easier to describe if the top forms
$\beta_i^n$ are allowed not to coincide with the ${\rm SL}(n, \C)$ structure at the marked points, we define
$$\SMG \,\,=\,\, \{ (E,\, \beta_i^j)\,\big\vert\,\, (E,\, \beta_i^1,\,\cdots ,\,\beta_i^{n-1},\, c_i\beta_i^{n})
\,\in\, \SMS\,\ \text{ for }\,\ c_i\,\in\, \C^*\}.$$
Note that the sheaf $E$ retains its ${\rm SL}(n,\C)$ structure.

There is a Zariski open subset $U\,\subset\, \SMG$ of the moduli space, consisting of points for which 
\begin{itemize}
\item the sheaf $E$ is a vector bundle, and is trivial,

\item there are full flags at the three points $p_1,\, p_2,\, p_3$, and

\item they are all mutually transverse in the global trivializations by sections of $E$. 
\end{itemize}

We can then normalize. It is easiest to think in terms of the equally trivial $E^*$, so that our framed flags $\beta^j_i$ are now 
$j$-vectors. We choose global trivializations $$e_1,\, \cdots ,\,e_n$$ of $E^*$ so that the flag at $p_1$ is the standard
one ($(E^*)_1^i\, =\, \langle e_1, \,\cdots ,\, e_i\rangle$), the flag at $p_2$ is the ``antistandard'' one
($(E^*)_2^i \,=\, \langle e_{n-i+1},\, \cdots ,\,e_n\rangle$). This form of the flags reduces the choices 
of basis  to the diagonal torus; incorporating the framing at $p_1$ so that it is the standard one then fixes the scale of $e_1, 
\,\cdots,\, e_n$, and then the framing at $p_2$ will be given by normalizing the basis $e_i$ by non-zero
constants $a_1,\,\cdots ,\, a_n$:
$$ f_i \,=\, a_i e_i$$
so that $\beta_2^i \,=\, (a_1\cdots a_{n-i})^{-1} e_1^*\wedge\cdots\wedge e_{n-i}^*$. The third framed flag at $p_3$ can then
be given a normalized basis
\begin{align}
g_1 \,=\,& m_{1,1} e_1 + m_{1,2} e_2 + \ldots + m_{1, n} e_n\nonumber\\
g_2\, =\,& m_{2,1} e_1 + m_{2,2} e_2 + \ldots + m_{2, n-1} e_{n-1}\nonumber\\
g_3 \,=\, & m_{3,1} e_1 + m_{3,2} e_2 + \ldots + m_{3, n-2} e_{n-2}\nonumber\\
\cdots & \cdots\, \cdots\nonumber\\
g_{n-1} \,=\, & m_{n-1, 1} e_1 + m_{n-1, 2} e_2\nonumber\\
g_n \,=\,& m_{n , 1} e_1\nonumber
\end{align}
with $m_{i, j}\,\in\, {\C}^*\,=\, \C\setminus\{0\}$
so that $E_3^i \,=\, \langle g_1,\, \cdots ,\, g_i\rangle$. Write this as 
$$ g \,=\, M e,$$ for a matrix $M$. The total dimension of the space of $a, \, M$ is then 
$$d \,:=\, n+ (n+1)n/2\,=\, \frac{(n+2)(n+1)}{2}-1.$$

The actions of the diagonal torus $(\C^*)^{n}$ of ${\rm GL}(n, \C)$ acting on the framings associated to the points $p_1,\, p_2,
\, p_3$ is given by its natural action on the bases $e_i,\, f_i,\, g_i$ respectively, and so on the coefficients $a_i,\, m_{i,j}$.
This extends to a free action of $$(\C^*)^d \,\,=\,\, (\C^*)^{\frac{(n+2)(n+1)}{2}-1}$$ on $U$, by adding  $\C^*$ actions, acting individually on
each $m_{i,j},\, j\,\leq\, n-i$, or on the $a_i$.

\subsection{A birational map to projective space}

In the previous formulation, it is unclear how to extend the action globally, as it depends on a normalization. We can give a different 
encoding of our data (equivalent to the preceding one over $U$) which will give a birational map to projective space on which 
$(\C^*)^d$ will act, albeit with different weights. Let
$$\widetilde{\SMG}\, \,=\,\, \{(E,\,\beta_i^j,\, V)\,\,\big\vert\,\, (E,\, \beta_i^j)\,\in\, \SMG,\ V
\,\subset\, H^0(X,\, E),\ \dim V \,=\, n\}.$$
Then $\widetilde \SMG$ maps bijectively to $\SMG$ on the generic locus where $E$ is trivial, and even on the next-to-generic locus where the 
splitting type of $E$ is minimally non-trivial, i.e., the degrees of the line bundle components are $(1,\,0,\,\cdots ,\,0,\,-1)$.

We have the evaluation maps $V\,\longrightarrow\, E_{p_j}$, and so on exterior powers, which we compose with the maps $\beta^i_j$, to give us 
$$\widetilde{\beta}^i_j \,=\, \widetilde{\beta}^i_j(E,\,\beta^i_j,\, V)\,\in\, \bigwedge\nolimits^i V^* .$$
We then consider for each $j_1,\, j_2, \,j_3\,\in\, \{0,\,\cdots ,\,n\}$ summing to $n$, the elements 
$$\widetilde{\beta}^{j_1,j_2, j_3} \,=\, \widetilde{\beta}_1^{j_1}\wedge\beta_2^{j_2}\wedge\beta_3^{j_3}
\,\in\, \bigwedge\nolimits^n V^* .$$
A change of basis of $V$ changes these by the same scalar for all indices $j_1,\, j_2, \,j_3$; we thus have a well defined
element $\phi(E,\,\beta_i^j,\, V)$ of 
$\bP^d$, as long as one of the $\widetilde{\beta}^{j_1,j_2, j_3}$ is non-zero. This is not always the case: for example we can have 
a case when the bundle $E$ is non-trivial (so that $\widetilde \beta^{n,0,0} \,=\,\widetilde \beta^{0,n,0}
\,=\, \widetilde \beta^{0,0,n} \,=\,0$) and none of the other indices $j_1,\, j_2,\, j_3$ for which the corresponding $\widetilde \beta_i^{j_i}$ are non-zero actually sum to $n$. 
We thus must take the closure of a graph:
$$\widehat{\SMG} \,\,=\,\, \overline{\{((E,\,\beta_i^j, \,V), \,(\widetilde \beta^{j_1,j_2,j_3})
\,\in\, \SMG\times \bP^d\,\big\vert\,\,\phi(E,\,\beta_i^j,\, V) \,= \, (\widetilde \beta^{j_1,j_2,j_3})\}}.$$
This gives us a sequence of maps, which are birational:
$$\SMG\,\longleftarrow\,\widetilde\SMG\,\longleftarrow \,\widehat{\SMG}\,\longrightarrow\, \bP^d.$$

Under these correspondences, the complement of the coordinate hyperplanes in $\bP^d$ maps bijectively to our open subset $U$. There is a 
natural action of $(\C^*)^d$ (the maximal torus of ${\rm PGL}(d+1, {\mathbb C})$) on $\bP^d$; it is the complexification of an $(S^1)^d$ 
action on $\bP^d$ which is Hamiltonian, with moment map taking values in the standard simplex. Via our birational maps, the action 
transfers over to the space $U$, in a way equivalent to what we have already defined.  We would hope to extend this to all of $\SMG$; 
however this cannot be done without deforming $\SMG$.
 
\section{Okounkov bodies and toric degeneration}

To see how this can be done, we recall the theory of Okounkov bodies \cite{KK, EK} and the theorems of Andersen, \cite{An}, and Harada-Kaveh 
\cite{HK}. A beautiful introduction to the theory can be found in the Notices survey article of Escobar and Kaveh \cite{EK}. The 
Okounkov body of a variety $X$ depends on two pieces of data. The first is the choice of a space $S$ of sections of a very ample line 
bundle bundle $L$, giving an embedding into projective space; as such the elements of $S$ generate the homogeneous coordinate ring of 
$X$. Typically, $S$ is simply $H^0(X,\, L)$. The second is some coordinate system near a point $p$, given by an ordered set of functions 
$f_1,\,\cdots, \, f_d$ near the point. This can be thought of as corresponding to $d$ divisors with normal crossing at the point; as we will be 
thinking of them in terms of a lexicographic order, they can be thought of as a flag (indeed, it suffices to consider the successive 
intersections $D_1,\, D_1\cap D_2,\,\cdots$). Given any other function $g$ near $p$, we can expand $g$ into a power series $\sum_{\alpha = 
(\alpha_1,\cdots ,\alpha_d)}c_\alpha f_i^{\alpha_1}\cdots f_d^{\alpha_d}$. We can then define a $\Z^d$-valued valuation $\nu(g)$ of $g$ by
$$\nu(g) \,=\, \min\{\alpha\,\,\vert\,\, c_\alpha\,\neq\, 0\}$$
here the minimum is with respect to a lexicographical order. (There are more general valuations which could be used,
however this is what we need.)

Now consider the homogeneous coordinate ring $$\C[X]\,=\, \bigoplus_{m\in\Z_{\geq 0}}\C[X]_m$$ generated by products of elements of $S$ 
(we fix a trivialization of $L$ near $p$); $\C[X]_m$ corresponds to the $m$-fold product. We have a semi-group $F_S\,\subset\,
\Z_{\geq 0} \times \Z^d$
$$F_S \,=\, \bigcup_{m\geq 0} \{ (m, \,\nu(g))\,\,\vert\,\, g\,\in \, \C[X]_m\}.$$
Let $C$ be the closure of the convex hull of $F_S\cup\{0\}$; define the Okounkov body $\Delta_S$ as the intersection of $C$ with the 
plane $m\,=\,1$. A standard example would be given by $\bP^n(\C)$, with the $S$ being the sections of $\SO_{\bP^n(\C)}(1)$, and the standard 
coordinate planes defining the valuation; we obtain the standard simplex as the Okounkov body. This is of course the moment polytope for 
the standard torus action of $\bP^n$, and the same result holds in general for toric varieties.
 
There is a natural question: if the Okounkov body of a variety $X$ is the polytope of a toric variety, can the variety be deformed to a 
toric one? This imposes constraints on the body, since polytopes have a finite number of sides, and the polytopes here are rational. 
When the body is of this form, the answer, by a theorem of Anderson, is yes. For a variety $X$, let $X(S)$ denote the variety defined 
as the closure of the image of $X$ in $\bP(S)$. Anderson produces a flat degeneration $\X$ of $X(S)$, a family $\X\,\longrightarrow\,\C$ with 
fibers $X_t \,=\, X(S)$ for $t\,\neq\, 0$, and $X_0$ toric, with constant Okounkov body.

\begin{theorem}[{\cite[pp.~1184--1185, Theorem 1]{An}}] Let $\nu$ be the valuation associated to a flag of subvarieties of $X$, and
let $S \,\subset\, H^0(X,\,L)$ be a linear system such that $F_S$ is finitely generated, and the Okounkov body $\Delta_S$ is a rational polytope.
\begin{itemize}
\item The variety $X(S)$ admits a flat degeneration $\SSX$ to the (not necessarily normal) toric variety $X_0 $. The normalization of 
$X_0$ is the (normal) toric variety corresponding to the polytope $\Delta_S$.

\item If a torus $T$ acts on X , such that $S$ is a $T$-invariant linear system and the flag consists of $T$-invariant subvarieties, 
then the degeneration is $T$-equivariant.

\item Suppose $S'\,\subset \,S$ is a subsystem inducing a birational morphism $$\phi\,:\, X(S)\,\longrightarrow\, X(S'),$$ and whose 
semi-group is also finitely generated. The corresponding degenerations of $X(S)$ and $X(S')$ are compatible: there is a commuting diagram 
of flat families
$$ \begin{matrix}\SSX && {\buildrel{\Phi}\over{\longrightarrow}}&&\SSX'\\&\searrow&&\swarrow\\ &&\C&&\end{matrix}$$
\end{itemize}
\end{theorem}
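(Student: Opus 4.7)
The plan is to follow the standard recipe: promote the $\Z^d$-valued valuation $\nu$ to a homogeneous filtration on the section ring, identify the associated graded with the semigroup algebra $\C[F_S]$, and then build the flat family by a Rees-algebra construction.

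First I would fix a trivialization of $L$ near $p$ and consider the graded homogeneous coordinate ring $R \,=\, \bigoplus_{m \geq 0} \C[X]_m$ generated by $S$. The valuation $\nu$ extends to a $\Z^{d+1}$-valued valuation on $R$ (lexicographic order, with the grading placed in the first slot), and for each $\lambda \,\in\, \Z^{d+1}$ one defines
\[
F^{\geq\lambda}R \,=\, \{g \,\in\, R \,\mid\, \nu(g) \,\geq\, \lambda\}, \qquad F^{>\lambda}R \,=\, \{g \,\in\, R \,\mid\, \nu(g) \,>\, \lambda\}.
\]
The key combinatorial identity to check is that the associated graded $\mathrm{gr}_{\nu}R \,=\, \bigoplus_{\lambda} F^{\geq\lambda}R/F^{>\lambda}R$ is canonically isomorphic to the semigroup algebra $\C[F_S]$. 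Since $F_S$ is finitely generated by hypothesis, $\C[F_S]$ is a finitely generated $\C$-algebra, and I would take $X_0 \,=\, \mathrm{Proj}\,\C[F_S]$, which is a projective toric variety whose normalization is the normal toric variety associated to the cone over $\Delta_S$, hence has moment polytope $\Delta_S$.

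To produce the flat degeneration, choose a $\Z$-linear functional $\ell$ on $\Z^{d+1}$ strictly positive on the nonzero part of $F_S$; this converts the lexicographic filtration into a $\Z$-filtration and lets one form the Rees algebra
\[
\mathcal{R} \,=\, \bigoplus_{k \in \Z}\, F^{\geq k}R \cdot t^{-k} \,\subset\, R[t, t^{-1}].
\]
This $\mathcal{R}$ is torsion-free over $\C[t]$, hence flat, with $\mathcal{R}/(t-c) \,\cong\, R$ for $c \,\neq\, 0$ and $\mathcal{R}/(t) \,\cong\, \mathrm{gr}_{\nu}R \,\cong\, \C[F_S]$. Applying $\mathrm{Proj}$ (relative to the grading inherited from $R$) gives the desired flat family $\SSX \,\longrightarrow\, \C$ with generic fiber $X(S)$ and special fiber $X_0$. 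For the equivariance statement, the hypothesis that $T$ preserves $S$ and that the flag defining $\nu$ is $T$-stable implies that the $T$-action preserves each $F^{\geq \lambda}R$; hence $T$ acts on $\mathcal{R}$ commuting with the $\C^*$-scaling of $t$, and the degeneration is $T$-equivariant. The compatibility with a subsystem $S' \,\subset\, S$ is pure functoriality: the subring $R' \,\subset\, R$ generated by $S'$ inherits the valuation and filtration, producing $\mathcal{R}' \,\subset\, \mathcal{R}$ and hence a commuting diagram over $\C$ whose restriction to $t \,\neq\, 0$ is $\phi \,:\, X(S) \,\longrightarrow\, X(S')$.

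The main technical obstacle is the identification $\mathrm{gr}_{\nu}R \,\cong\, \C[F_S]$. In each graded piece one must show that $F^{\geq(m,\alpha)}R_m / F^{>(m,\alpha)}R_m$ is either zero or one-dimensional, with nonvanishing precisely when $(m,\alpha) \,\in\, F_S$, and that the induced multiplication realizes the semigroup law of $F_S$ without extra relations. This is where the choice of $\nu$ as a \emph{one}-dimensional leading-term valuation matters, and where finite generation of $F_S$ is used again, now to guarantee that $\mathcal{R}$ is a finitely generated $\C[t]$-algebra so that the resulting $\SSX \,\longrightarrow\, \C$ is of finite type. Once these two points are in place, the remaining assertions (rationality of $\Delta_S$ forces the $X_0$ to be projective over $\C$ rather than only locally of finite type, and normalization of the toric variety associated to a saturated affine semigroup) are standard toric geometry.
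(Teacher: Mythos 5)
Your proposal follows exactly the route the paper indicates for Anderson's theorem (``a degeneration of the ring structure, transforming a ring with a filtration into a graded one''): the valuation filtration on the section ring, the identification of the associated graded with the semigroup algebra $\C[F_S]$ via the one-dimensional-leaves property, and the Rees algebra over $\C[t]$ giving the flat family, with equivariance and the subsystem compatibility following functorially. This is the standard and correct argument, so there is nothing to add.
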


The proof proceeds by a degeneration of the ring structure, transforming a ring with a filtration into a graded one. Once we have the 
toric degeneration we can ask what the corresponding Hamiltonian picture would be here. The answer is given by Harada and Kaveh, under 
the hypotheses that the family $\SSX\,\longrightarrow\, \C$ has an expression as a restriction of a uniform projection
$\bP^N\times\C\,\longrightarrow\, 
\C$, that the toric action on $X_0$ is the restriction of an action on $\bP^N$, that the K\"ahler forms on $\SSX$ are restrictions of 
the form on $\bP$, and invariant under the torus action, and finally that $\SSX$ is smooth away from the origin:

\begin{theorem}[{\cite[Theorems 1 and 2]{HK}}]\label{th-hk}
Let $X$ be a smooth $n$-dimensional projective variety, and let $\omega$ be a K\"ahler structure on $X$. Suppose that there exists a toric 
degeneration $\pi \,:\, \SSX \,\longrightarrow\, \C$ of $X$, satisfying the above conditions. Then:
\begin{itemize}
\item There exists a surjective continuous map $\phi\,:\, X\,\longrightarrow\, X_0$ which is a symplectomorphism restricted
to a dense open subset $U \,\subset\, X$.

\item There exists a completely integrable system $\mu \,=\, H_1,\,\cdots ,\,H_n$ on $(X,\,\omega)$, in the sense of
the Hamiltonians being continuous on $X$, smooth and defining a standard integrable system on $U$, such that its
moment image $\Delta$ coincides with the moment image of $(X_0,\, \omega_0)$, which is a polytope.

\item On the open dense subset $U$ of $X$, the integrable system $\mu \,=\, H_1,\,\cdots ,\,H_n$ generates a Hamiltonian torus action, 
and the inverse image $\mu^{-1}(\Delta^0)$ of the interior of $\Delta$ under the moment map $\mu$ of the integrable system lies in the 
open subset $U$.

\item Under the hypothesis of the semi-group $F_S$ being finitely generated, $\Delta$ is the
Okounkov body of $X$ under the linear system $S$ giving the projective embedding.
\end{itemize}
\end{theorem}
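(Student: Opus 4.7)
My plan is to exploit the degeneration $\pi \colon \SSX \,\longrightarrow\, \C$ as a geometric bridge between $X \,=\, X_1$ and its toric model $X_0$, following the gradient-Hamiltonian technique of Ruan. The central tool is the vector field
$$V \,\,=\,\, -\frac{\nabla\,\mathrm{Re}(\pi)}{\|\nabla\,\mathrm{Re}(\pi)\|^{2}}$$
defined on the smooth locus of $\SSX$, with the gradient taken with respect to the ambient K\"ahler metric on $\bP^{N}\times\C$ restricted to $\SSX$. This field enjoys two complementary properties: it is a gradient flow for $\mathrm{Re}(\pi)$, so its integral curves sweep each fiber $\pi^{-1}(t)$ down to $\pi^{-1}(0)$ as $t$ decreases along the positive real axis; and it is simultaneously Hamiltonian with respect to $\mathrm{Im}(\pi)$, so its time-$t$ flow is a symplectomorphism between nearby smooth fibers.

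First I would verify that the flow of $V$ is defined on a Zariski-open subset $U \,\subset\, X$ whose complement maps into the singular locus of $X_0$, using the hypothesis that $\SSX$ is smooth away from $\pi^{-1}(0)$. Second, I would define $\phi \colon X \,\longrightarrow\, X_0$ by sending $x\,\in\, X$ to the $t\,\to\, 0$ limit of the $V$-orbit through $x$; on $U$ this is a symplectomorphism onto the smooth part of $X_0$ equipped with its restricted K\"ahler form, and $\phi$ extends continuously to $X$ by compactness. Third, I would pull back the Hamiltonian torus action on $X_0$ along $\phi$; writing $\mu_0 \colon X_0\,\longrightarrow\, \Delta$ for its moment map, the components of $\mu_0\circ \phi$ give continuous functions $H_1,\,\ldots,\,H_n$ on $X$ which are smooth on $U$, Poisson-commute on $U$ because $\phi\vert_U$ is a symplectomorphism, and generate a Hamiltonian $T^{n}$-action there. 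Since $\mu_0^{-1}(\Delta^{0})$ is contained in the smooth locus of the normal toric variety that normalizes $X_0$, and $\phi$ restricts to a diffeomorphism over this locus, one gets $\mu^{-1}(\Delta^{0})\,\subset\, U$ automatically. The identification of $\Delta$ with the Okounkov body $\Delta_S$ is then immediate from Anderson's theorem, since the polytope of the normalization of $X_0$ is by construction $\Delta_S$ and normalization does not alter the moment image.

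The principal obstacle is analytic rather than formal: one must show that the $V$-flow is well-behaved near the central fiber. The flow time from $t\,=\,1$ to $t\,=\,0$ must be finite on a dense open set, the $t\,\to\, 0$ limits defining $\phi$ must exist, and the resulting $\phi$ must be continuous on all of $X$. This calls for \L{}ojasiewicz-type estimates on $\|\nabla\pi\|$ near the singular locus of $X_0$, combined with the smoothness of $\SSX\setminus\pi^{-1}(0)$ to bound integral curves away from the divisorial strata of $X_0$; the potentially severe singularities of $X_0$ mean that the naive flow can escape to infinity or fail to have limits, and handling this uniformly — so that pulling back the \emph{smooth} torus action on $(X_0)^{\mathrm{sm}}$ produces genuinely continuous Hamiltonians on all of $X$ — is where the real substance of the argument lies.
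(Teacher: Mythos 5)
This theorem is not proved in the paper at all: it is quoted verbatim as a citation of Theorems 1 and 2 of Harada--Kaveh \cite{HK}, so there is no in-paper argument to compare against. Your sketch is in fact a faithful outline of the strategy Harada and Kaveh themselves use --- Ruan's gradient-Hamiltonian vector field $V=-\nabla\,\mathrm{Re}(\pi)/\|\nabla\,\mathrm{Re}(\pi)\|^{2}$ transporting the fiber $X_1$ symplectically onto $X_0$, pulling back the toric moment map, and then fighting to extend the flow continuously across the singular locus of the central fiber --- and you correctly identify that the continuity of $\phi$ near the singularities of $X_0$ is where the real content of their proof lies.
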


All the hypotheses of Theorem \ref{th-hk} seem quite likely to hold for the case which concerns us, apart from smoothness.

\section{Strata, and birational equivalence}

In order to apply Anderson's theorem to the case which interests us, we would like to be able to see how the property of having a 
rational finite Okounkov body gets transported through the birational equivalences given above. There is no question of the body being 
preserved: equivariantly birational toric varieties do not have the same polytope. To see what is at stake here, consider the simple 
case of $\bP^2$, with its standard torus action, standard affine coordinates and so on. We consider the identity projective embedding, 
given by section of $\SO_{\bP^2}(1)$. In a standard trivialization, its sections are $1,\,x,\,y$; those of $\SO_{\bP^2}(m)$ are
$1,\, x,\, y,\, x^2,\, xy,\, y^2,\, \cdots ,\, xy^{m-1},\,y^m$. The Okounkov body, corresponding to the divisors $x\,=\,0$
and $y\,=\,0$ is a simplex. Now blow up the origin $p$, 
$$\pi\,:\,\widetilde{\bP}^2 \,\longrightarrow\, \bP^2,$$ with exceptional divisor $E$ sitting over the origin. We have
coordinates $(\widetilde{x},\, 
\widetilde{y})$ on the blowup, with $(x,\,y)\, =\, (\widetilde{x},\, \widetilde{x}\widetilde{y})$, and we take the divisors cut out
by $(\widetilde{x},\, \widetilde{y})$ to define our Okounkov body. An embedding of $\widetilde{\bP}^2$ is given by sections
of $\pi^*({\SO}_{\bP^2}(2))(-E)$, spanned 
by $\widetilde{x},\, \widetilde{x}\widetilde{y},\, \widetilde{x}^2,\, \widetilde{x}^2\widetilde{y},\,
\widetilde{x}^2\widetilde{y}^2$, or, on $\bP^2$ 
the space of sections by $x,\, y,\, x^2,\, xy,\, y^2$, i.e., the space of sections of ${\SO}_{\bP^2}(2)\otimes \SI_p$, where $\SI$ is the ideal sheaf of 
the origin. These sections and their powers gives an Okounkov body which is a truncated simplex, obtained from the standard simplex by 
lopping off a corner. The result is indeed the moment polytope for $\widetilde \bP^2$.
 
It would be tempting to assert that the property of having a finitely generated rational polytope as Okounkov body survives any 
birational equivalence; we do not see any easy way of carrying this through. In our case, fortunately, we are assisted by the existence of a 
finite stratification of our spaces, with equivalences on the strata being produced by group actions which give a certain homogeneity to 
the space and reduce the number of added constraints to the Okounkov polytope to a finite number. Indeed, our space of bundles is a 
space of sheaves $E$, with additional structures given by the elements $\beta_i^j$. The types of possible sheaves $E$ are
$$\SO(m_1)^{\ell_1} \oplus \SO(m_2)^{\ell_2}\oplus \cdots \oplus \SO(m_s)^{\ell_s}\oplus (( \C_{p_1})^{\oplus k_1})\oplus ( ( \C_{p_2})^{\oplus k_2})\oplus ( ( \C_{p_3})^{\oplus k_3})$$
with the possibilities for $m_i, k_j$ being bounded by a degree constraint (summing to zero) and by stability. Stratifying the space $W$ 
used to build the moduli space by these types, the moduli space construction is basically the same in a neighborhood of any point in a 
stratum, one point mapping to any other by the action of the group we quotient by. Going further, and choosing an n-dimensional space 
$V$ of sections, there is a further stratification by the intersection of $V$ with the space of sections of different types 
$\SO(m_j)^{\ell_j},\,( \C_{p_i})^{\oplus k_i}$ and by the dimensions of the pullbacks of $E_{p_i}$ to $V$, and by the intersection 
patterns of all these spaces. Again, any neighborhood in the moduli construction of one pair $(E,\, V)$ in a stratum is equivalent to any 
other; what distinguishes them is the values of the coordinates $\widetilde{\beta}^{j_1,j_2, j_3}\,=\, \widetilde{\beta}_1^{j_1} \wedge 
\widetilde{\beta}_2^{j_2} \wedge \widetilde{\beta}_3^{j_3}$. Now consider for each point $p$ of a stratum the set $I_p$ of indices $ 
j_1,\,j_2, \,j_3$ for which $\widetilde{\beta}^{j_1,j_2, j_3}$ vanishes. It is easy to see that if $I_p,\, I_q$ are the same for $p,\, q$ in the 
same stratum, they are equivalent by actions of ${\rm GL}(n, \C)$ on $E_{p_i}$, $i \,=\, 1,\,2,\,3$ similar to those used in
Section \ref{se2.2}; basically we normalize two of the three flags, and the coordinates of the third give the different values.

We now consider the variety $\widehat{\SMG}$ mapping to $\bP^d$, with an isomorphism from the set $U$, to the complement of the 
coordinate hyperplanes in $\bP^d$. The variety $\widehat{\SMG}$ can thus be thought of as sitting inside a blow-up of various coordinate 
k-planes in $U$; the homogeneity argument of the previous paragraph tells us that the blowing up needed is indeed constant on the set of 
a stratum with constant $I_p$.

The variety $\widehat{\SMG}$ is thus embedded by the linear system $W$ of sections of a line bundle $L$, given as the pull-back of a 
line bundle $\SO(m)$ on $\bP^d$, twisted by a divisor $-E$; it corresponds on $\bP^d$ to $H^0(\bP^d,\, \SO(m)\otimes \SI) $ for some ideal 
corresponding to a family of coordinate $k$-planes, possibly with multiplicity, but with a finite number of generators corresponding to 
$\widetilde \beta^{j_1,j_2, j_3}$ vanishing to an appropriate order on our strata. We choose as origin the point $p_0$ corresponding to 
the trivial bundle with all $\beta_i^j$, $j\,=\, 1,\, \cdots ,\, n-1$, zero (i.e., no flags) and $\beta_i^n$ given by the
${\rm SL}(m, \C)$-structure, blowing 
up further if necessary to ensure that the point and its images under rescaling the $\beta_i^n$ are smooth; we pick a system of divisors 
$D_i$ in the preimage of the coordinate planes on $\bP^d$, and again there are a finite number of rational constraints on the order of 
the $\widetilde{\beta}^{j_1,j_2, j_3}$. In short, we have a rational subpolytope of the simplex, and we can apply Anderson's theorem to 
$\widehat{\SMG}$.
 
We now want to project from $\widehat{\SMG}$ to ${\SMG}$, again desingularising the image in $\SMG$ of the point $p_0$ and its 
re-scalings; let the projection be called $\rho$. It is of course an isomorphism on $\SMG$ away from a set of codimension at least two, 
and in particular in a neighborhood of the base point, consequently mapping our coordinate planes $D_i$ down to $\SMG$. Now take the 
direct image under $\rho$ of $L$. As the sections of $L$ are all products of length $m$ of the $\widetilde \beta^{j_1,j_2, j_3}$, the 
direct image takes values in the $m$-th power $L_0^m$ of the natural determinant line bundle $L_0$ over the moduli space. Pushing down 
and pulling back the sections in $W$, we obtain a subspace $W' \,=\, \rho^*\rho_*W \,\subset\, W$ of the space $W$, and we can think of $W'$ as 
a subspace of $H^0(\SMG,\, L_0)$. Again, by the homogeneity of our stratification, its Okounkov body is cut out in that of 
$\widehat{\SMG}$ by a finite number of constraints, and so is a rational polytope.

In short, again by Anderson's theorem:

\begin{theorem}
The space ${\SMG}$ admits a degeneration to a toric $\SMG^0$. The torus actions on the framings on $\SMG$ extend to torus actions on 
$\SMG^0$.
\end{theorem}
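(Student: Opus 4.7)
The plan is to run Anderson's theorem along the chain of birational maps established in Section~2, namely $\SMG\,\longleftarrow\,\widetilde\SMG\,\longleftarrow \,\widehat{\SMG}\,\longrightarrow\, \bP^d$, obtaining first a toric degeneration of $\widehat{\SMG}$ from its embedding into $\bP^d$, and then descending it to $\SMG$ using the compatibility clause (the third bullet) of Anderson's theorem. The whole argument rests on the stratification of the moduli space by splitting type of $E$, intersection pattern of a chosen $n$-dimensional $V\,\subset\, H^0(X,\,E)$ with the summands, and the vanishing pattern of the $\widetilde\beta^{j_1,j_2,j_3}$, together with the homogeneity of each stratum under the $\GL$-actions at $p_1,\,p_2,\,p_3$.

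First I would fix, on $\widehat{\SMG}$, the line bundle $L \,=\, \pi^*\SO_{\bP^d}(m)\otimes \SO(-E)$ for an effective divisor $E$ resolving the base point $p_0$ and its orbit under the framing torus, and take the linear system $W$ to be the corresponding space of sections. For the valuation I would use the flag of divisors $D_i$ pulled back from the coordinate hyperplanes of $\bP^d$. Since the sections $\widetilde\beta^{j_1,j_2,j_3}$ are $(\C^*)^d$-eigenvectors, both $W$ and the flag are torus-invariant, so Anderson's theorem will apply $T$-equivariantly as soon as one verifies that the semigroup $F_W$ is finitely generated and $\Delta_W$ is a rational polytope.

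This verification is the main step and the main obstacle. I would carry it out stratum by stratum: inside each stratum, the $\GL$-actions at the three marked points normalize any two of the three flags to canonical form, exactly as in Section~\ref{se2.2}, so that the residual data for the third flag contribute only a finite list of rational vanishing inequalities on the indices $(j_1,j_2,j_3)$. Since there are finitely many strata, the union of these inequalities is a finite set of rational inequalities cutting out a rational subpolytope of the standard simplex, and the resulting semigroup is finitely generated. The delicate point is to show that the Pl\"ucker relations on the $\widetilde\beta^{j_1,j_2,j_3}$ and the flag compatibility conditions of Lemma \ref{stable-sheaves} do not destroy finite generation; I would handle this by expressing both in the coordinates $a_i,\, m_{i,j}$ of Section~\ref{se2.2} and invoking standard toric/GIT facts about ideals of coordinate subspace arrangements.

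Finally I would transfer the degeneration from $\widehat{\SMG}$ to $\SMG$ via $\rho\,:\,\widehat{\SMG}\,\to\,\SMG$. Since $\rho$ is birational and an isomorphism off codimension two, $\rho_*L$ injects into a power $L_0^m$ of the natural determinant line bundle on $\SMG$, and the subspace $W' \,=\,\rho^*\rho_*W\,\subset\, W$ gives a torus-invariant linear system inducing a birational morphism from $\SMG$ to its projective image. Its Okounkov body is cut out of $\Delta_W$ by a further finite list of rational constraints (again by the homogeneity of the stratification), so the third bullet of Anderson's theorem produces a compatible flat toric degeneration $\SMG^0$ of $\SMG$. The extension of the framing torus actions is automatic because every section in $W'$ is already a framing-torus eigenvector and Anderson's degeneration is $T$-equivariant whenever the linear system and the flag of divisors are.
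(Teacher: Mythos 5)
Your proposal follows essentially the same route as the paper: embed $\widehat{\SMG}$ by the linear system $W$ of sections of $\pi^*\SO_{\bP^d}(m)\otimes\SO(-E)$ with the valuation coming from the preimages of the coordinate hyperplanes, use the finite stratification and the $\GL(n,\C)$-homogeneity of strata to reduce the Okounkov body to a rational subpolytope of the simplex cut out by finitely many constraints, apply Anderson's theorem to $\widehat{\SMG}$, and then descend to $\SMG$ via $\rho$ and the subsystem $W'=\rho^*\rho_*W$, with torus-equivariance coming from the $\widetilde\beta^{j_1,j_2,j_3}$ being torus eigenvectors. The one place you are more explicit than the paper --- flagging that finite generation of the semigroup must survive the Pl\"ucker and flag-compatibility relations --- is precisely the step the paper also treats only at the level of the homogeneity argument, so your account matches the paper's proof in both structure and level of detail.
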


\section{Hamiltonians and imploded cross-sections}

\subsection{Doubles and the construction of the moduli space}

As we noted in the introduction, our space of framed sheaves admits a symplectic interpretation, as a space of representations of the 
fundamental group of the three-punctured sphere into ${\rm SU}(n)$, decorated with some extra structure at the marked points. We would like in 
this section to explore our ``almost toric structure'' from this symplectic point of view.

We first recall some constructions, made in \cite{AMM} and \cite[Section 6]{HJS1}, of the double $DK \,=\, K\times K$ of a compact 
semi-simple group $K$, and of its imploded cross-section. Setting $DK \,=\, \{(u,\,v)\,\in\, K\times K\}$, we can think of $DK$ as the space of 
representations of the fundamental group of a cylinder, i.e., flat $K$-connections, but on a bundle equipped with trivializations at two 
points $q_1,\, q_2$, one at each end of the cylinder. The element $u$ represents parallel transport from $q_1$ to $q_2$, and the element 
$v$ represents parallel transport around the end, starting at $q_2$. The space of representations comes equipped with a two-form, 
defined by:
$$
\omega\,\,=\,\,-\frac12\bigl(Ad(v)u^*\theta_L,\,u^*\theta_L\bigr)
-\frac12\bigl(u^*\theta_L,\,v^*(\theta_L+\theta_R)\bigr).
$$
Here $\theta_L$ is the left Maurer-Cartan form, and $\theta_R$ is the right form. The form $\omega$ is not closed, and $d\omega$ is
given by the pull-back under a suitable group-valued moment map of the canonical three-form on the compact group. In our case, the group action is 
one of $K\times K$ on $DK$:
$$
(k_1,\,k_2)(u,\,v) \,= \,(k_1uk_2^{-1},\, k_2 vk_2^{-1}).
$$
In terms of flat connections on the cylinder, this is the action corresponding to changes of the trivializations at $p_1, \, p_2$. The 
moment map here is given by $(Ad(u)v^{-1},\,v)\,\in\, K\times K$, and $DK$ is a quasi-Hamiltonian $K\times K$ space. This three-form 
disappears under restriction to a torus or under quasi-Hamiltonian reduction, and as this is what we will be doing, the two form will 
end up being closed.

The next step will be to restrict $v$ to lie in the fundamental alcove $\Delta$ of the torus $T$, the polytope representing conjugacy 
classes in $T$. For ${\rm SU}(n)$, we take a logarithm and divide by $2\pi\sqrt{-1}$, so that
$$ v \,=\, \exp (2\pi\sqrt{-1}(\alpha_1, \,\alpha_2,\,\cdots ,\,\alpha_n)).$$
The alcove $\Delta$ is precisely the set of ordered eigenvalues 
$$\Delta \,=\, \left\{(\alpha_1\,\geq\,\alpha_2\,\geq\,\cdots\, \geq\,\alpha_n\,\geq\, \alpha_1-1)\,\,\big\vert\,\,
\sum_{i=1}^n \alpha_i \,=\, 0\right\}$$
for each marked point. This simplex of weights decomposes into faces $$\Delta^I,\ \ \, I \,=\, \{i_1\,<\,i_2\,<\,\cdots\,<\,i_k\}$$
consisting of the weights for which $\alpha_{i}\,>\,\alpha_{i+1}$ if and only if $i\,\in\, I$. There is a similar decomposition for arbitrary $K$. 

We now can restrict to $K\times \Delta\,\subset \,DK$. For $v$ in the interior of $\Delta$, the form is non-degenerate; but on the
boundary strata $\Delta_I$ of $\Delta$ there are null directions, which are orbits of the commutator group $[K_I,\, K_I]$
of the stabilizer $K_I$ of the elements of $\Delta_I$; see \cite{HJS1}. We quotient these out, to obtain the imploded cross-section:
\begin{equation}\label{imploded}
DK_{\mathrm{impl}}\,\, =\,\, \sqcup_I K_I/[K_I, K_I] \times \Delta_I .
\end{equation}
This is a stratified quasi-Hamiltonian $K\times T$--space. The action at $p_2$ is now restricted to the torus $T$.
 
We can use the $DK_{\mathrm{impl}}$ in \eqref{imploded} as building blocks for the construction of the moduli spaces $M$
of framed representations on the trinion. Indeed, take three copies of 
$DK_{\mathrm{impl}}$:
\begin{equation}
\left\{ (u_1,\,v_1,\, u_2,\, v_2,\, u_3,\,v_3) \,\in\, DK_{\mathrm{impl}}^3\right\}.
\end{equation}
Together, their elements represent holonomies $v_i\,\in\, \Delta$ at the punctures $p_i$,\, $i\,=\,1,\,2,\,3$, along with
parallel transport $u_i$ to a common point $p_0$. The constraint that they define together a representation of the fundamental group 
$$
u_1v_1u_1^{-1} u_2v_2u_2^{-1} u_3v_3u_3^{-1}\,=\,1
$$
is equivalent to setting to $1$ the moment map for the diagonal action of $K$, with a quotienting by $K$ eliminating the choice
of an arbitrary trivialization at $p_0$. Thus, the end result is:
 
\begin{theorem}[{\cite{HJS1}}]\label{JHS1}
The space $M$
$$
M\,= \, \left\{ (u_1,\,v_1,\, u_2,\, v_2,\, u_3,\,v_3) \,\in\, DK_{\mathrm{impl}} ^3\,\,\big\vert\,\,
u_1v_1u_1^{-1} u_2v_2u_2^{-1} u_3v_3u_3^{-1}\,=\,1\right\}/K
$$
is a Hamiltonian $T^3$ space with moment map 
$$
(u_1,\,v_1, \,u_2, \,v_2,\, u_3,\,v_3) \,\longmapsto\, (v_1,\, v_2,\, v_3)\,=\,V.
$$
Its Hamiltonian reduction at $V=(v_1,\, v_2,\, v_3)$ is the moduli space $M_V$ of parabolic bundles with holonomy
$v_i$ at $p_i$. The reduction fixes the $v_i\,\in\, \Delta$, and quotients out the orbit of $K_i/[K_I,\, K_I]$, eliminating the framings of the subquotients at the punctures.
\end{theorem}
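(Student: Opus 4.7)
The plan is to construct $M$ by a sequence of quasi-Hamiltonian operations in the sense of Alekseev--Malkin--Meinrenken \cite{AMM}, and then to identify the symplectic reductions at fixed $V$ with parabolic moduli via the Mehta--Seshadri correspondence.

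First I would recall that each $DK$ is a quasi-Hamiltonian $K\times K$-space with the moment map $(Ad(u)v^{-1},v)$ and the two-form $\omega$ given in the text, and that its imploded cross section $DK_{\mathrm{impl}}$ carries an induced stratified quasi-Hamiltonian $K\times T$ structure as proved in \cite{HJS1}, where on each stratum indexed by $I$ the second factor becomes $T$-valued through the alcove $\Delta$ and the orbits of $[K_I,K_I]$ have been collapsed. I would then form the fusion product of three copies to obtain a stratified quasi-Hamiltonian $(K\times T)^3$-space; under fusion, the three $K$-factors are merged to a single diagonal $K$-action whose moment map is the product $(u_1v_1u_1^{-1})(u_2v_2u_2^{-1})(u_3v_3u_3^{-1})$, while the second factors retain a $T^3$-action with moment map $(v_1,v_2,v_3)$.

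The next step is to impose the constraint $u_1v_1u_1^{-1}u_2v_2u_2^{-1}u_3v_3u_3^{-1}=1$, which is exactly the condition that the fused $K$-moment map be the identity. Geometrically this says that the three holonomies glue to a genuine flat connection on the trinion, and quotienting by the residual diagonal $K$ removes the choice of trivialization at the basepoint $p_0$. By the quasi-Hamiltonian reduction theorem of \cite{AMM}, passing to this quotient kills the anomalous $d\omega$, so the descended form on $M$ is genuinely closed (on each stratum) and the remaining $T^3$-action is Hamiltonian in the usual sense with the stated moment map $(v_1,v_2,v_3)$.

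Finally, to identify $M_V$ with the parabolic moduli space, I would fix $v_i\in\Delta$, which pins down the holonomy conjugacy class at each $p_i$, and appeal to the Mehta--Seshadri correspondence: unitary flat connections on the trinion with prescribed boundary conjugacy classes are in bijection with polystable parabolic bundles whose parabolic weights are read off from the eigenvalues of the $v_i$, and the residual quotient by $K_I/[K_I,K_I]$ built into \eqref{imploded} removes precisely the framings of the successive subquotients of the flag determined by the centralizer type of $v_i$. The hard part will be the stratified bookkeeping: one must check, stratum by stratum in \eqref{imploded}, that fusion, the moment constraint, and the $K$-quotient all descend coherently, and that passing between strata corresponds to degenerating the flag types at the punctures; this compatibility is the main technical content of \cite{HJS1} and ensures that the answer is a single well-defined stratified Hamiltonian $T^3$-space.
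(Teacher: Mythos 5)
Your proposal follows essentially the same route as the paper (and as the cited source \cite{HJS1}): realize each factor as the imploded cross-section of the quasi-Hamiltonian double $DK$, fuse three copies so the diagonal $K$-moment map becomes the product of the conjugated holonomies, impose the relation and quotient by $K$ to kill the anomaly and leave a genuine Hamiltonian $T^3$-space with moment map $(v_1,v_2,v_3)$, and identify the reductions with parabolic moduli. This matches the construction the paper sketches before quoting the theorem, including the role of the $[K_I,K_I]$-quotient on the boundary strata in removing the framings.
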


\subsection{Hamiltonian flows and vector fields}
 
We would like the  Hamiltonian flows on the generic locus of $M$, corresponding to the toric structure on the degeneration. This locus will be given by first asking that the $v_i$ lie in the interior $\Delta_0$ of $\Delta$ (over which $DK_{\mathrm{impl}}$ is simply $K\times \Delta_0$), and also asking that the frames represented by $u_i$ satisfy certain transversality conditions. As this is simpler to see in the complex domain, we first complexify $M$ and $DK_{\mathrm{impl}}$ to  spaces $M_\C , (DK_{\mathrm{impl}})_C$ and consider everything over the complex domain, so that the $u_i$ belong to the complexified group and the $v_i$ to the complement of the root planes in $T_\C$. 
 
We specialize to ${\rm SU}(n)$ and its complexification ${\rm SL}(n,\C)$. The interior $\Delta_0$ of the fundamental alcove corresponds to distinct 
eigenvalues and in the complex picture (for $M$) to full flags. With the usual conventions, these flags correspond in the torus trivialization to 
$\langle e_1\rangle\,\subset\, \langle e_1,\,e_2\rangle\,\subset\, \cdots$. We will keep this ordering for $p_1$, and take the inverse
order $\langle e_n\rangle\,\subset\, \langle e_n,\, e_{n-1}\rangle\,\subset\, \cdots$ for $p_2,\, p_3$. Matrices denoted by $b$ will be triangular, either upper ($b^+$) or lower ($b^-$); those denote by $n$ will be strictly triangular (i.e., with 1's on the diagonal) and either upper ($n^+$) or lower ($n^-$); those denoted by $d$ will be diagonal. Corresponding Lie algebra elements will be denoted by $\beta^\pm, \mu^\pm, \delta$, with $\beta^\pm= \mu^\pm + \delta$; variations of the $v$ will denoted by $\zeta$.
 
Over a generic locus, then, we can factor the $u_i$ as $b_i^+n_i^-= n_i^+d_in_i^-$, or the other way round. We can also suppose  that the flag transported from $p_1$ and the flag transported from $p_2$ intersect transversally at 
$p_0$, and so we can normalize our frame at $p_0$ by choosing the intersections as a basis so that $u_1$ is an upper triangular matrix 
$b_1^+= n_1^+$ (i.e., preserving the flag at $p_1$), $u_2$ is a strictly lower triangular matrix $n_2^-$, (preserving the flag and the trivialization from $p_2$), and, 
with some further genericity, $u_3$ factors uniquely as $b_3^+n_3^-$, a product of an upper triangular matrix and a strictly
lower triangular matrix, which we also write as $n_3^+d_3n_3^-$.

We then have the relation
\begin{equation}\label{relation}b_1^+v_1 (b_1^+)^{-1} n_2^-v_2(n_2^-)^{-1} \,=\, b_3^+n_3^-v_3^{-1} (n_3^-)^{-1}(b_3^+)^{-1}.\end{equation}
In this normalisation, $n_3^+$ represents the relative positions of the flags, and the diagonal components $d_1,d_3$ the framings.   As we are on the generic, stable, locus, we can vary these (framed) flags (i.e., $b_3^+$) freely while keeping the $v_i$ fixed, and there
will be a corresponding variation of the $u_i$ on the unitary locus; this, however, will be accompanied by a corresponding variation of $n_3^-, b_1^+, n_2^-$ in order to preserve \ref{relation}.
 
We now evaluate the  form $\omega$ on two-parameter infinitesimal variations
$u, \,v $ over $(K\times \Delta_0)_\C$. We set 
\begin{align}
& b^+ (1+\epsilon \beta^++\delta \widehat\beta^+) n^-(1+\epsilon \mu^-+\delta \widehat\mu^-)\nonumber \\
&=\, b^+ n^-(1+ \epsilon ((n^-)^{-1}\beta^+n^- +\mu^-)+\delta ((n^-)^{-1}\widehat\beta^+n^- +\widehat\mu^-))
\nonumber
\end{align}
and 
$$
v (1+\epsilon \zeta +\delta \widehat \zeta),
$$
and so the form applied to the variations becomes
\begin{align}&\omega((\beta^+,\,\mu^-,\, \zeta),\, (\widehat\beta^+,\,\widehat\mu^-,\,\widehat \zeta))\, =\nonumber\\
\frac 12 tr( (v(n^-)^{-1}\beta^+n^-v^{-1})& ((n^-)^{-1}\widehat\beta^+n^-)- (v(n^-)^{-1}\widehat\beta^+
n^-v^{-1}) ((n^-)^{-1}\beta^+n^-))\nonumber\\
+ \frac 12 tr( (v(n^-)^{-1}\beta^+n^-v^{-1})& \widehat\mu -
(v(n^-)^{-1}\widehat\beta^+n^-v^{-1})\mu^-)\nonumber\\
+ \frac{1}{2\pi i}tr ((n^-)^{-1}\beta^+n^- +\mu^-)\widehat \zeta &- ((n^-)^{-1}\widehat\beta^+n^- +\widehat\mu^-)\zeta).
\nonumber 
\end{align}

For $(\beta^+,\,\mu^-, \,\zeta) \,= \,(0,\, \mu^-,\,0)$ this becomes:
\begin{align}
\omega((0,\mu^-, 0), (\widehat\beta^+,\widehat\mu^-,\widehat \zeta)) \,=\, &
\frac 12 tr(- (v(n^-)^{-1}\widehat\beta^+n^-v^{-1})\mu^-)\nonumber\\
 =\, & \frac 12 tr(-\widehat\beta^+(n^-v^{-1}\mu^-v(n^-)^{-1}))\nonumber\\
  =\, & \frac 12 tr(-\widehat\mu^+(n^-v^{-1}\mu^-v(n^-)^{-1}))\label{pairing}
\end{align} 
 
Note that this implies that the various vectors tangent to the space $n^+B^{-}$, that is $\mu^-,\, \delta$ pair amongst themselves to 
zero.   
The formula (\ref{pairing}) also shows that the basis tangent vector $(\mu(i,\,j)^+)_{k,l}\,=\, \delta_{k,i}\delta_{l,j}$,\, $i\,
<\,j$, pairs non-trivially with $(\mu(j,\,i)^-)_{k,l}\,=\, \delta_{k,j}\delta_{l,i}$
with coefficient $(b^-)_{j,j}(b^-)_{i,i}^{-1}v_iv_j^{-1}$. Furthermore, again for $i\,<\,j$, $\mu(i,\,j)^+$ only pairs non-trivially with 
 $\mu(i',\,j')^-$ if $j'\,\leq\, i\,<\,j\,\leq\, i'$. This ensures that, for $i\,<\,j$ the symplectic form gives a
nondegenerate pairing between the $\mu(i,\,j)^+$ and $\mu(j',\,i')^-$, which we write as 
$$ \omega (\mu(i,\,j)^+, \mu(i',j')^-)\, =\, A_{i,j;i'j'}$$
so that on linear combinations $P\,=\, \sum_{i<j}p_{i,j}\mu(i,\,j)^+$, $Q\,=\, \sum_{i>j}q_{i,j}\mu(i,j\,)^-$, we have, schematically
$$ \omega (P,\,Q) \,=\, p^TAq.$$

\subsection{Hamiltonians}

The formula above also suggest that the components of the group $N^+$  of strictly upper triangular 
matrices should be the Hamiltonians generating the vector fields tangent to $b^+N^{-}$. Now note that the group $N^+$ admits a 
logarithm (inverting the exponential map) with values in the Lie algebra $\gn^+$ of strictly upper triangular matrices. Let $\xi\,\in\,\gn^+$ 
with $\exp(\xi) \,=\, n^+$, so that $\xi\,=\, \log(n^+)$. We have the formula
\begin{align}\label{expsum}\frac{d}{dt} \exp(\xi+t\zeta) |_{t=0} \,=&\, \exp (\xi) \cdot \Big( \frac{1- \exp(-ad(\xi))}{ad(\xi)}\Big) \zeta\\
=&\, \exp(\xi)(1-\frac{ad(\xi)}{2} + \frac{ad(\xi)^2}{6}-\ldots ) \zeta.\nonumber\end{align}
Since the adjoint action is nilpotent, this gives a bijective map between the tangent vectors to $\gn^+$ and the tangent vectors to
$N^+$. Consider the components of $\xi$, for $i\,<\,j$:
$$
H_{i,j}(  n ^+,b ^-, v) \,=\,\log(n ^+)_{i,j}\,=\, \xi_{i,j}.
$$
By the formula (\ref{expsum}) above, the differential of $H_{i,j}$ evaluated on the left invariant vector fields $P\,=\,
\sum_{i<j}p_{i,j}\beta(i,j) ^+$ is a combination of coefficients:
$$dH_{i,j} \,=\, p_{i,j} + \sum_{i'<i, j'>j} C_{i',i; j',j}(b^+) p_{i',j'}$$
or, schematically,
$$dH_{i,j} \,=\, (p^T C)_{i,j}.$$
Combining, since we want for our Hamiltonian vector fields $X_{H_{i,j}} \,=\, \sum_{i'j'} M_{i',j'; i,j}\mu(j',i')^-$ that, for all $P$
$$dH_{i,j}(P) \,=\,-\omega(P, X_{H_{i,j})}$$
we have 
$$ (p^T C)_{i,j} \,=\, -(p^TAM)_{i,j}$$
giving $M\,=\, A^{-1}C$ and
$$X_{H_{i,j}} \,=\, -\sum_{i'j' }(A^{-1}C )_{i',j'; i,j}\mu(j',i')^-$$
for $i\,<\,j$.

Finally, we have two "symplectically dual" quantities. First, the Hamiltonians associated to the torus coefficients $v$: set as Hamiltonians 
$$H^V_{l}\, =\, \frac {1}{2\pi i} \log (v )_l,\ \ \,  l \,=\, 1,\,\cdots, \,n-1.$$
In the complex domain these would be multi-valued. They give the torus actions associated to $M$ by Theorem \ref{JHS1}, acting on
the $u_i$ by right multiplication. Their vector fields are given by left invariant vector fields $\delta$ corresponding to the diagonal action on the right.  By \eqref{pairing}, they
Poisson commute with the $H_{i,j}$.  Dually, we can also  take as Hamiltonians the entries of the  diagonal matrix $d$ in a decomposition $n^+dn^-$; again they Poisson commute with the $H_{i,j}$. Their vector fields will be vectors $\zeta$ tangent to the diagonal matrices $v$.

Summarizing:

\begin{theorem}
The Hamiltonians $H_{i,j},\, 1\,\leq\, i\,<\,j\,\leq\, n$ and $H^V_{l}$,\,  $l\,=\,1,\, \cdots ,\, n-1$ generate a
completely integrable Hamiltonian system (i.e., a holomorphic Lagrangian foliation) in the generic set of $(DK_{\mathrm{impl}})_\C$, with leaves $n^+B^-\times\{v\}$. Similarly, the Hamiltonians $H_{i,j},\, 1\,\leq\, i\,<\,j\,\leq\, n$ and the components $d_i$,\,  $l\,=\,1,\, \cdots ,\, n$ of $d$ generate a
completely integrable Hamiltonian system (i.e., a holomorphic Lagrangian foliation) in the generic set of $(DK_{\mathrm{impl}})_\C$, with leaves $n^+N^-\times\ (\Delta_0)_\C$ \end{theorem}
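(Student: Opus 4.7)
The plan is to verify the three standard ingredients of a completely integrable system --- correct count of functions, pairwise Poisson commutativity, and functional independence --- and then to identify the Lagrangian leaves as the joint level sets. Essentially all the analytic content is already packaged in the pairing formula \eqref{pairing} and the explicit expression of the vector fields $X_{H_{i,j}}$ derived in the preceding subsection; the theorem then reduces to assembling these facts.

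First I would check dimensions. The generic locus of $(DK_{\mathrm{impl}})_\C$ is $\SL(n,\C)\times (\Delta_0)_\C$, of complex dimension $(n^2-1)+(n-1)=(n-1)(n+2)$. The $H_{i,j}$ for $1\leq i<j\leq n$ number $n(n-1)/2$, and the $H^V_l$ for $l=1,\ldots,n-1$ contribute an additional $n-1$, yielding exactly the half-dimension $(n-1)(n+2)/2$ required for a Lagrangian foliation. For the second family the diagonal components $d_l$ subject to the $\SL(n,\C)$ constraint $\prod_l d_l=1$ likewise contribute $n-1$ independent functions, giving the same total.

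Next I would verify pairwise Poisson commutativity. The decisive input, stated immediately after \eqref{pairing}, is that the subspace of variations tangent to $n^+B^-$, i.e., spanned by $\mu^-$ and $\delta$ type variations, is isotropic for $\omega$. By the explicit formula $X_{H_{i,j}}=-\sum_{i',j'}(A^{-1}C)_{i',j';i,j}\mu(j',i')^-$ each $X_{H_{i,j}}$ is of $\mu^-$-type, and $X_{H^V_l}$, generating the right torus action on the $u$-factor, is a diagonal $\delta$-type variation. Consequently $\{H_{i,j},H_{i',j'}\}=\{H_{i,j},H^V_l\}=\{H^V_l,H^V_{l'}\}=0$; the last identity can alternatively be obtained from the observation that $X_{H^V_{l'}}$ preserves $v$ while $H^V_l$ depends only on $v$. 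For the second family, the $d_l$ are functions of the diagonal factor in $u=n^+dn^-$, so $X_{d_l}$ is $\zeta$-type tangent to $v$; setting $\beta^+=\widehat\beta^+=0$ in the full pairing formula above shows that the subspace $(\mu^-,\zeta)$ is likewise isotropic (the two surviving trace terms $\tr(\mu^-\widehat\zeta)$ and $\tr(\widehat\mu^-\zeta)$ vanish because strictly lower times diagonal is strictly lower), yielding all required brackets.

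For functional independence and the identification of the leaves, I would use that $H_{i,j}=\log(n^+)_{i,j}$ together with the global diffeomorphism $\exp\colon\gn^+\to N^+$, which exhibits the $H_{i,j}$ as coordinates on $N^+$; the $H^V_l$ are standard coordinates on the Cartan torus; and the two families depend on the disjoint factors of the generically unique decomposition $u=n^+b^-$ together with $v\in(\Delta_0)_\C$. The joint level sets therefore fix $n^+$ and $v$ and leave $b^-$ free, recovering exactly the stated leaves $n^+B^-\times\{v\}$ of the predicted dimension $(n-1)(n+2)/2$. An identical argument for the second family fixes $n^+$ and $d$ in the decomposition $u=n^+dn^-$ and leaves $n^-,v$ free, producing the leaves $n^+N^-\times(\Delta_0)_\C$. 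The one genuinely nontrivial point --- and therefore where I would expect the main obstacle --- is to ensure that the matrix $A$ in the pairing $\omega(\mu(i,j)^+,\mu(i',j')^-)=A_{i,j;i'j'}$ is invertible on the generic open stratum, so that the formula for $X_{H_{i,j}}$ is well defined; this reduces to the transversality of the flags at $p_1,p_2,p_3$ assumed at the start of the subsection together with $v\in(\Delta_0)_\C$, which together guarantee the required nondegeneracy.
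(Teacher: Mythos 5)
Your proposal is correct and follows essentially the same route the paper takes: the paper's ``proof'' is precisely the preceding computation, namely the isotropy of the $(\mu^-,\delta)$ (respectively $(\mu^-,\zeta)$) directions read off from \eqref{pairing}, the explicit formula $X_{H_{i,j}}=-\sum(A^{-1}C)_{i',j';i,j}\,\mu(j',i')^-$ showing the Hamiltonian vector fields are tangent to the leaves, invertibility of $A$ from its triangular structure on the generic locus, independence via the logarithm on $N^+$, and the half-dimension count $n(n-1)/2+(n-1)=(n-1)(n+2)/2$. You correctly isolate the invertibility of $A$ as the one point needing the genericity hypotheses, which is exactly what the paper's remark on the pairing pattern $j'\leq i<j\leq i'$ is there to supply.
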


The Hamiltonians $H_{i,j}$ are of course defined only on a generic locus; we give other Hamiltonians, more globally defined, which give the same 
ring, and which descend to $M$.

\subsection{Global and reducible Hamiltonians}

We now have Lagrangian foliations, with associated Hamiltonians, on each of the three $DK_{\mathrm{impl}}$ factors. We would like to combine these into one set. Thinking of $DK_{\mathrm{impl}}$ or its complexification as parametrizing  framed representations, or their complexification, we have at each puncture $p_i$ a torus-valued 
endomorphism $v_i$, and a flag of subspaces $$E_{i,j}\,=\,\langle e_1,\, e_2,\,\cdots ,\,e_j\rangle,\, \ \text{ for }\ i\,=\,1$$
and $$E_{i,j}\,=\, \langle e_n,\, e_{n-1},\,\cdots ,\, e_{n-j+1}\rangle,\, \ \text{ for }\ i\,=\, 2,\, 3.$$
The map $v_i$ generates an automorphism $ V_{i,j}\,=\, \bigwedge^j(v_i)$ (a scalar) of $\bigwedge^jE_{i,j}$, essentially as the product 
of the first $j$ eigenvalues. Now transport the basis $e_j$ of $E_{i,j}$ to $p_0$ using the $u_i$, obtaining vectors $u_i(e_j)$ in the 
fiber $E_{p_0}$ of the bundle $E_{p_0}$, and so, taking their product, a $j$-vector $u_i(E_{i,j})$ in $\bigwedge^j(E_{p_0})$. The fiber 
$E_{p_0}$ has a top $n$-form $dvol$ given by the ${\rm SL}(n,{\mathbb C})$ structure; for $j_1+j_2+j_3 \,=\, n$, consider the matrix
$$G_{j_1, j_2, j_3} \,=\,(u_1(e_1,\,\cdots ,\,e_j),\, u_2(e_n,\,\cdots ,\,e_{n-j_2+1}),\, u_3(e_n,\,\cdots ,\,e_{n-j_3+1})$$
and take the volume:
$$dvol(u_1(E_{1,j_1})\wedge u_2(E_{2,j_2})\wedge u_3(E_{3,j_3})) \,=\, \det G_{j_1, j_2, j_3}.$$
Note that decomposing $u_1$ as $b_1^-n_1^+$ we can replace $u_1$ by $b_1^-$; likewise, setting $u_j= b_j^+n_j^-, j= 2,3$, we can replace $u_j$ by  $b_j^+$ for $j=2,3$
This is invariant under the left action of ${\rm SL}(n,\C)$ on the $u_i$. It is not invariant under the action of the tori associated to the punctures.
 (On our generic set we could, for example,
 use the right torus action on $u_i$ to normalize its principal minors $\det (u_i)_{s,t}$,\, $s,\, t\,\in\, \{1,\,\cdots ,\,j\}$,\,
$i\,=\, 1,\,3$ and $\det (u_2)_{s,t}$,\, $s, \,t\,\in\, \{n-j+1,\, \cdots ,\,n\}$,\, $i\,=\, 1,\,3$ to one; but then you lose the  ${\rm SL}(n,\C)$-invariance)
We set for $j_1+j_2+j_3 \,=\, n$,\, $j_i \,\in\, \{0,\, \cdots ,\,n-1\}$,\,,
 \begin{equation}
\widetilde H^G_{j_1, j_2, j_3} \,=\, \det G_{j_1, j_2, j_3}.
\end{equation}
In our normalization of $u_1\,=\, b_1^+=\,n_1^+ d_1 ,\, u_2 \,= \,n_2^-,$ and $u_3 \,=\, b_3^+ n_3^-=\, n_3^+d_3 n_3^-=$,  the Hamiltonians become determinants of submatrices of $b_3^+$, times a product of entries of $d_1$:
$$H^G_{j_1, j_2, j_3} \,= \, d_{1,1}...d_{1,j_1}\det( (b_3^+)_{i,j},\, i\,=\, j_1+1,\,\cdots ,\,j_1+j_2,\, j \,=\, n-j_2+1,\, \cdots ,\,n).$$

\begin{proposition}
In the above normalization on the generic set there is a biholomorphism relating the $H^G_{j_1, j_2, j_3}$ and (the Hamiltonians
$H_{i,j}$ and the coefficients of $d_1, d_3$)  so that they represent a reparametrization of our Hamiltonians.
\end{proposition}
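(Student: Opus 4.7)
The plan is to exhibit an explicit polynomial change of variables between $\{H^G_{j_1,j_2,j_3}\}$ and $\{H_{i,j}\}\cup\{d_{1,i}\}\cup\{d_{3,i}\}$, and show it is rationally invertible on a Zariski open subset of $(DK_{\mathrm{impl}})_\C$. I proceed in three stages: make the dependence of $H^G$ on $(n_3^+,d_1,d_3)$ transparent; recover $d_1,d_3$ from a distinguished ``boundary'' subset of the Hamiltonians; then recover the entries of $n_3^+$, and hence the $H_{i,j}=\log(n_3^+)_{i,j}$, by an induction on its columns.

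\textbf{Factoring.} In the normalization $u_1=n_1^+d_1,\ u_2=n_2^-,\ u_3=n_3^+d_3n_3^-$, write $b_3^+=n_3^+ d_3$; pulling the column scalings $d_{3,c}$ out of the minor in the formula for $H^G$ gives
$$H^G_{j_1,j_2,j_3}\;=\;(d_{1,1}\cdots d_{1,j_1})\,(d_{3,n-j_2+1}\cdots d_{3,n})\,\det N^{j_1,j_2},$$
where $N^{j_1,j_2}$ is the submatrix of the upper-unitriangular $n_3^+$ in rows $j_1+1,\ldots,j_1+j_2$ and columns $n-j_2+1,\ldots,n$ (with $\det N^{j_1,0}:=1$). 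The count $\binom{n+2}{2}-3=\binom{n}{2}+2(n-1)$ agrees with the number of target coordinates subject to the $\SL$ constraints $\prod d_{1,i}=\prod d_{3,i}=1$. For $j_2=0$ the formula reads $H^G_{j_1,0,n-j_1}=d_{1,1}\cdots d_{1,j_1}$, so successive ratios recover $d_{1,1},\ldots,d_{1,n-1}$ (and $d_{1,n}$ from $\det=1$). For $j_3=0$ the block $N^{n-j_2,j_2}$ is the bottom-right $j_2\times j_2$ principal block of $n_3^+$, itself upper-unitriangular of determinant $1$; dividing by the now-known product of $d_1$ coefficients gives $d_{3,n-j_2+1}\cdots d_{3,n}$, and successive ratios recover $d_{3,2},\ldots,d_{3,n}$.

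\textbf{Interior.} After dividing out the recovered $d_1,d_3$ products, the remaining $\binom{n}{2}$ Hamiltonians deliver $\det N^{j_1,j_2}$ for $j_2\ge 1$ and $j_1+j_2\le n-1$. I recover the strictly upper-triangular entries of $n_3^+$ column by column from column $n$ leftward, and within each column in order of decreasing row index. Expanding $\det N^{j_1,k}$ along its first row, the coefficient of the top-left entry $(n_3^+)_{j_1+1,n-k+1}$ is the minor $\det N^{j_1+1,k-1}$, whose entries lie either in columns $\ge n-k+2$ (recovered in earlier steps of the column induction) or in column $n-k+1$ at rows $>j_1+1$ (recovered earlier in the current column sweep, or forced by the structure of $n_3^+$). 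The remaining terms in the row expansion are analogous: each is the product of an entry of $n_3^+$ in columns $\ge n-k+2$ (known) with a sub-minor whose entries have already been determined. Thus $\det N^{j_1,k}$ is affine-linear in the unknown $(n_3^+)_{j_1+1,n-k+1}$ with leading coefficient $\det N^{j_1+1,k-1}$, and the unknown is determined wherever this pivot is nonzero. Iterating recovers the full $n_3^+$, and the $H_{i,j}=\xi_{i,j}$ follow by the polynomial inverse of $\xi\mapsto n_3^+=\exp\xi$.

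The main obstacle is verifying that the pivots $\det N^{j_1+1,k-1}$ are generically non-vanishing, so that the constructed inverse is rational. Each such pivot is itself one of the $H^G$'s (up to division by the known $d$-products), hence it is not identically zero on $(DK_{\mathrm{impl}})_\C$: at a generic $n_3^+$ the corresponding submatrix has the Hessenberg-type shape with $1$'s on its $j_3$-th sub-diagonal and free super-diagonal entries, and its determinant is a non-zero polynomial in those entries. The simultaneous non-vanishing of the finitely many pivots therefore cuts out a Zariski dense open set, on which both the forward map and the constructed inverse are regular, giving the asserted biholomorphism and hence the reparametrization of Hamiltonians.
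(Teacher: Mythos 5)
Your proof is correct and follows essentially the same route as the paper's: an inductive inversion recovering the entries of $n_3^+$ column by column (rightmost column first, then moving leftward, within a column by decreasing row) from the minors, with the diagonal data read off from the boundary triples; you merely make explicit the pivots $\det N^{j_1+1,k-1}$ and the recovery of $d_1,d_3$ that the paper leaves implicit in ``let us fix the $d_i$.'' One remark: the displayed formula for $H^G_{j_1,j_2,j_3}$ that you take as your starting point appears to have $j_2$ and $j_3$ interchanged relative to what $\det G_{j_1,j_2,j_3}$ actually yields (the minor should be $j_3\times j_3$, which is what the paper's own proof uses, e.g.\ in treating $H^G_{n-3,1,2}$ as a $2\times 2$ determinant), but since the index set $\{j_1+j_2+j_3=n\}$ is symmetric under that swap, the collection of functions is unchanged and your argument applies verbatim.
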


\begin{proof}
Note that in both cases there are $(n+4)(n-1)/2$ functions; now let us fix the $d_i$; this leaves $n(n-1)/2$ functions on both sides.. We first remark that under the exponential map, the $H_{i,j}$ map 
biholomorphically to the coefficients of $n_3^+$. The map giving $H^G_{j_1, j_2, j_3} $ from $(n_3^+)_{i,j}$ is given above; conversely, 
first remark that for $j_3\,=\, 1$, the Hamiltonians $H^G_{j_1, j_2, j_3} $ are simply (up to the diagonal elements of $b_1^+$, and a sign) 
the coefficients $(n_3^+)_{i,n}$. Now take $j_3 \,=\, 2$,\, $j_1\, =\, n-3$. The value of $(n_3^+)_{n-1, n-1}$ being one, we can compute 
$(n_3^+)_{n-2, n-1}$ from the determinant $H^G_{n-3, 1, 2} $; keeping $j_3\,=\, 2$, and decreasing $j_1$, we can then compute successively 
the coefficients $(n_3^+)_{i, n-1}$ from $H^G_{i-1, 1, 2} $. Now begin again with $j_3\,=\, 3$, we can again work our way up, computing 
successively $(n_3^+)_{n-3, n-2},\, (n_3^+)_{n-4, n-2},\, \cdots$ and so on. We then proceed in the same fashion for $j_3\,=
\, 4,\,\cdots ,\,n-1$.
\end{proof} 

The $H^G_{j_1, j_2, j_3} $, by the previous section, Poisson commute. They are invariant under the left action of ${\rm SL}(n,\C)$, and so descend to commuting Hamiltonians on the quotiented space $M$, by the results of \cite{AMM}, section 5.  Note that the $H^G_{j_1, j_2, j_3} $ give us well defined functions on ${\rm SL}(n,\C)\times T_\C$; however, when restricted to
${\rm SU}(n)\times\Delta$, they will not be invariant under $[K_I,\, K_I] $, except on the generic locus $\Delta_0$ where $[K_I,\, K_I] $ is trivial.

\subsection{Reality} 

We thus have an isotropic holomorphic foliation along (an open dense subset) of the complexification $M_\C$ of our space $M$, cut out by 
the functions $H^G_{j_1, j_2, j_3} $. This does not yet give us a real Lagrangian foliation on $M$, as indeed, even on $M$, this 
foliation is very far from being real. We must adjust.

The subset $M\,\subset\, M_\C$ is the fixed point locus of an involution $I$, induced from the involution on ${\rm SL}(n,\C)$ given by 
$$I(g) \,=\, (g^*)^{-1}.$$
Our symplectic form $\omega$ is invariant under $I$, in the sense that $\overline{I^*(\omega)} \,=\, \omega$. We will want as
Hamiltonians functions invariant under the involution $I$. 
For the torus Hamiltonians, this is straightforward: set 
$$ \alpha_k^l\,=\,Re(H^V_{k,l}).$$
These are the coefficients $\alpha $ in the fundamental alcove, but with the order reversed for $k\,=\,2,\,3$. For the other
Hamiltonians, we take
\begin{equation}
H^{GG}_{j_1, j_2, j_3} \,=\, A(\alpha_ 1^{j_1+1}-\alpha_ 1^{j_1})(\alpha_2^{n-j_2}-\alpha_ 2^{n-j_2+1})
(\alpha_3^{n-j_3}-\alpha_3^{n-j_3+1}) (I^*H^G_{j_1, j_2, j_3})^*H^G_{j_1, j_2, j_3};\end{equation}
here $A$ is the product $(\alpha_1^1-\alpha_1^n-1)(\alpha_2^1-\alpha_2^n-1)(\alpha_3^1-\alpha_3^n-1)$. The $\alpha$--factors
and $A$--factors are there to ensure that the Hamiltonian is invariant under the action of $[K_I,\, K_I]$, on the strata where this
is non-trivial. When we are on the unitary locus, the $H^{GG}$ are up to a real factor the determinants of a Hermitian matrix of the form
$$\begin{pmatrix} a_{1,1}&a_{1,2}&a_{1,3}\\ a_{1,2}^*&a_{2,,2}& a_{2,3}\\a_{1,3}^*&a_{2,3}^*&a_{3,3}\end{pmatrix},$$
with the $a_{i,i}$ diagonal and real. As the matrix is Hermitian, the determinant is real; it is invariant both under
the ${\rm SU}(n)$ action and the unitary torus actions. It is the product of $H^G_{j_1, j_2, j_3}$ and its complex conjugate, and so represents the norm squared of $H^G_{j_1, j_2, j_3}$, again up to some factors. 

There remains to see that these Hamiltonians commute. Denote $\{j_1,\, j_2,\, j_3\}$ by $\sigma$, and $\{k_1\,,k_2,\, k_3\}$ by
$\rho$. We already have $X_{H^G_\sigma} (H^G_\rho)\,=\,0$; also $H^G_\rho$ is constant along the $H^G_\sigma$ flow. If a
function is constant, its norm squared is too, and so $X_{H^G_\sigma} (H^{GG}_\rho)\,=\, 0$. Likewise,
$\overline{X_{H^G_\sigma}} (H^{GG}_\rho) \,=\, 0$, and so 
$$X_{H^{GG}_\sigma} (H^{GG}_\rho) \,=\, 0 .$$
The functions Poisson commute amongst themselves, and evidently commute with the torus Hamiltonians.

\section{Degenerations of bundle moduli}

Let us start with the moduli space $\SMS(Y_1)$ of ${\rm SL}(n,\C)$ bundles on a smooth genus $g$ curve $Y_1$, or equivalently, the space of 
representations of the fundamental group of $Y_1$ into ${\rm SU}(n)$. The results of \cite{BHu} give us a degeneration of the moduli spaces 
${\SMS}(Y_t)$ associated to a degeneration of $Y_1$ through smooth curves $Y_t$ into a ``totally nodal'' curve $Y_0$. This is a curve 
obtained by glueing $(2g-2)$ three-punctured spheres $X_i$ pairwise at their punctures, with $3g-3$ glueings in all. The limiting moduli
space ${\SMS}(Y_0)$ is a quotient of the product of the moduli spaces $\SMS(X_i)$ of our framed parabolic sheaves over trinions, where the quotient is by 
the anti-diagonal torus actions on the trivializations associated to the elements of $\SMS(X_i)$: that is, if the puncture $p_i$ of 
$X_i$ is being identified, to $p_j$ of $X_j$, we identify the trivializations associated to elements of $\SMS(X_i)$ at $p_i$ with the 
trivializations associated to elements of $\SMS(X_j)$ at $p_j$; note that the glueing is only partial, of subquotients of the bundle. We 
then quotient by the anti-diagonal torus action preserving these identifications. This quotienting can either be done symplectically, or 
holomorphically. Note that symplectically, there is a reversal of the $\alpha_i$ on the two  punctures being glued: $\alpha^i_1= -\alpha_2^{n-i+1}$, hence the anti-diagonal quotient instead of the diagonal one.

Thus we have a degeneration of ${\SMS}(Y_t)$ into a limit ${\SMS}(Y_0)$, which is a quotient, symplectic or holomorphic, of a product of 
$\SMS(X_i)$. Note that this is not a degeneration with a constant fiber away from $0$; the ${\SMS}(Y_t)$ are different, and indeed there 
is a Torelli theorem telling us that ${\SMS}(Y_t)$ determines $Y_t$ \cite{BGL}, \cite{GL}.

The ${\SMS}(X_i) $ themselves are not toric, except for ${\rm SL}(2,\C)$, when they are identified with $\bP^3(\C)$. However, since we now 
have a toric degeneration of the $\SMS(X_i)$ which is equivariant for the torus actions at the punctures, we now have one for 
${\SMS}(Y_0)$, and so a degeneration for ${\SMS}(Y_t)$.

\end{document}